\newcommand{\Z}{\mathbb{Z}}
\newcommand{\Q}{\mathbb{Q}}
\newcommand{\legendre}[2]{\left(\frac{#1}{#2}\right)}
\newtheorem{theorem}{Theorem}
\newtheorem{lemma}[theorem]{Lemma}
\newtheorem{corollary}[theorem]{Corollary}
\newtheorem*{theorem*}{Theorem}
\title{Numbers Represented by a Finite Set of Binary Quadratic Forms}
\author[C. Donnay]{Christopher Donnay}
\author[H. Ellers]{Havi Ellers}
\author[K. O'Connor]{Kate O'Connor}
\author[K. Thompson]{Katherine Thompson}
\author[E. Wood]{Erin Wood}
\begin{document}

%\tableofcontents

%%%%%%%%%%%%%%%%%%%%%%%%%%%%%%%%%%%%%%%%%
%% ABSTRACT
%%%%%%%%%%%%%%%%%%%%%%%%%%%%%%%%%%%%%%%%%

\begin{abstract}
Every quadratic form represents 0; therefore, if we take any number of quadratic
forms and ask which integers are simultaneously represented by all members of the
collection, we are guaranteed a nonempty set. But when is that set more than just the
"trivial" {0}? We address this question in the case of integral, positive-
definite, reduced, binary quadratic forms. For forms of the same discriminant, we can
use the structure of the underlying class group. If, however, the forms have different
discriminants, we must apply class field theory.
\end{abstract}
\maketitle

%%%%%%%%%%%%%%%%%%%%%%%%%%%%%%%%%%%%%%%%%%%%%%%%%%%
%%%%%%%%%%%%%%%%%%%%%%%%%%%%%%%%%%%%%%%%%%%%%%%%%%%
%%%%%%%%%%%%%%%%%%%%%%%%%%%%%%%%%%%%%%%%%%%%%%%%%%%
%%%%%%%%%%%%%%%%%%%%%%%%%%%%%%%%%%%%%%%%%%%%%%%%%%%
%%%%%%%%%%%%%%%%%%%%%%%%%%%%%%%%%%%%%%%%%%%%%%%%%%%
%%%%%%%%%%%%%%%%%%%%%%%%%%%%%%%%%%%%%%%%%%%%%%%%%%%
%%%%%%%%%%%%%%%%%%%%%%%%%%%%%%%%%%%%%%%%%%%%%%%%%%%

%%%%%%%%%%%%%%%%%%%%%%%%%%%%%%%%%%%%%%%%%
%% INTRODUCTION
%%%%%%%%%%%%%%%%%%%%%%%%%%%%%%%%%%%%%%%%%

\section{Introduction and Statement of Results}
%\textcolor{red}{THERE ARE STILL MAJOR PROBLEMS WITH THE FUNDAMENTAL DISCRIMINANT THEOREM. I FEEL LIKE THERE ARE THE SAME COMMENTS REGULARLY SO...DON'T KNOW WHAT MORE TO SAY. I'VE BEEN EDITING AND COMMENTING SOME ONLY IN THIS DOCUMENT, SO TO AVOID RINSING AND REPEATING OF FUTURE READ THROUGHS I RECOMMEND EDITING ONLY HERE AND then COPYING TO MASTERLIST}
One of the most fundamental results of number theory and certainly one of the first concerning representation of integers by binary quadratic forms is the following theorem of Fermat:
\begin{theorem*}[Fermat, 1640]
If $n$ is a positive integer, then there are integers $x$ and $y$ so that $x^2+y^2=n$ if and only if in the prime factorization of $n$, $$n= \displaystyle\prod_{i=1}^k p_i^{e_i}$$ all primes $p_i \equiv 3 \pmod{4}$ are raised to even powers. In particular, an odd prime $p$ is of the form $x^2+y^2$ if and only if $p \equiv 1 \pmod{4}$.
\end{theorem*}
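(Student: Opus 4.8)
The plan is to reduce the statement to the prime case and then build up to general $n$ via a multiplicativity identity. First I would isolate the central claim: an odd prime $p$ is a sum of two squares if and only if $p \equiv 1 \pmod 4$. The ``only if'' direction here is immediate by reducing modulo $4$: since every square is congruent to $0$ or $1$, a sum $x^2+y^2$ lies in $\{0,1,2\} \pmod 4$, so an odd prime so represented must be $\equiv 1$.

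The ``if'' direction is the heart of the argument. Given $p \equiv 1 \pmod 4$, Euler's criterion gives $\legendre{-1}{p} = (-1)^{(p-1)/2} = 1$, so there is an integer $m$ with $m^2 \equiv -1 \pmod p$. From here I see two routes. The elementary route is Thue's lemma: among the $(\lfloor \sqrt p \rfloor + 1)^2 > p$ pairs $(a,b)$ with $0 \le a,b \le \lfloor \sqrt p \rfloor$, the pigeonhole principle forces two whose values of $a - mb$ agree modulo $p$; their difference yields $x,y$ with $0 < x^2 + y^2 < 2p$ and $p \mid x^2 + y^2$, whence $x^2 + y^2 = p$. The slicker route passes to the Gaussian integers $\Z[i]$: since $p \mid m^2 + 1 = (m+i)(m-i)$ but $p$ divides neither factor, $p$ is not prime in $\Z[i]$, so it factors as $p = \pi\bar\pi$, and taking norms gives $p = a^2 + b^2$. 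I would present the Thue/descent version to keep the exposition self-contained.

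To pass from primes to general $n$, I would invoke the Brahmagupta--Fibonacci identity $(a^2+b^2)(c^2+d^2) = (ac - bd)^2 + (ad + bc)^2$, which is precisely multiplicativity of the norm on $\Z[i]$. Since $2 = 1^2 + 1^2$, every prime $p \equiv 1 \pmod 4$ is a sum of two squares by the above, and $q^2 = q^2 + 0^2$ for any prime $q \equiv 3 \pmod 4$, the identity shows that any $n$ whose $\equiv 3 \pmod 4$ prime factors occur to even powers is a sum of two squares. For the converse in the general case, suppose $n = x^2 + y^2$ and a prime $q \equiv 3 \pmod 4$ divides $n$. I claim $q \mid x$ and $q \mid y$: otherwise one of them, say $y$, is invertible modulo $q$, and then $(xy^{-1})^2 \equiv -1 \pmod q$ contradicts $\legendre{-1}{q} = -1$. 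Hence $q^2 \mid n$ and $n/q^2 = (x/q)^2 + (y/q)^2$, so descending on the exponent of $q$ forces it to be even.

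The main obstacle is clearly the ``if'' direction for a single prime: the modular arithmetic pins down the necessary congruence condition effortlessly, but actually producing a representation requires the genuinely nontrivial input of either the pigeonhole/descent argument or unique factorization in $\Z[i]$. Everything else is bookkeeping with the norm identity and a short quadratic-residue observation.
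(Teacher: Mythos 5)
The paper does not prove this statement at all: it is quoted as classical background in the introduction, so there is no internal proof to compare yours against. Your outline is the standard, correct, and complete argument --- the mod $4$ obstruction for necessity at a prime, Euler's criterion plus Thue's pigeonhole (or factorization in $\Z[i]$) for sufficiency at a prime $p\equiv 1\pmod 4$, the Brahmagupta--Fibonacci identity to assemble general $n$, and the descent showing a prime $q\equiv 3\pmod 4$ must divide both $x$ and $y$ for necessity in general. The one point to spell out when writing the Thue step in full is that the two pigeonholed pairs are distinct, so the difference $(x,y)$ is nonzero and hence $0<x^2+y^2<2p$ genuinely forces $x^2+y^2=p$; you have already flagged this, so there is no gap.
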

The more general question of which integers are represented by a positive definite integral quadratic form is much deeper. %is is a special case of a much more general problem of classifying the integers represented by positive definite quadratic forms. 
Restricting our attention to binary forms, one could view Fermat's Theorem in the following way: with finitely-many exceptions, the primes represented by the sum of two squares can be completely determined by congruence conditions. In \cite{GoNI}, conditional on the Generalized Riemann Hypothesis (GRH) and using Geometry of Numbers techniques, the authors determine all $2779$ binary forms with this property and list the specific congruence conditions for a prime to be represented. Moving on to ternary and quaternary forms, there continue to be many famous representation results including Legendre's three-squares theorem \cite{Legendre} and Lagrange's four-squares theorem \cite{Lagrange}. But also, generalizing in a sense these results (respectively) are Dickson's proof of the exact integers represented by the ``Ramanujan ternary'' forms \cite{Dickson}, and the Bhargava-Hanke $290$-Theorem which classifies $6436$ quaternary forms representing all positive integers \cite{BH}. Most recently there is work of Rouse \cite{Rouse} which classifies quadratic forms representing all odd positive integers, and work of Barowsky et al \cite{TAU} which lists $73$ explicit pairs of integers which can be excepted by a positive definite quadratic form with even coefficients.\\%\\, and recent work by Barowsky et al which classifies the pairs \textcolor{red}{Ramanujan, Dickson, Quad Squad, 290 Theorem, etc.}\\
\\
\noindent Each of these results, however, either begins with a fixed form and classifies integers represented or begins with a collection of integers to be represented and constructs desired forms. There is very little in the literature, however, to be found on determining the integers simultaneously represented by more than one form. Certainly, for instance, if one restricts to primes and to forms listed in \cite{GoNI} this can be made explicit. There are also situations where via completely elementary methods one can show that the only integer $m$ simultaneously represented by two binary forms is $m=0$; this is the case, for instance, in considering the forms $x^2+5y^2$ and $2x^2+2xy+3y^2$. The purpose of this paper is to explore cases beyond these; that is, given a finite collection of binary integral positive definite quadratic forms, when is a nonzero integer simultaneously represented by all forms in the collection?\\
\\
\noindent We began by considering the integers simultaneously represented by positive definite binary integral quadratic forms (which for the rest of this section we simply call ``forms'') of the same discriminant $\Delta$. This leads to our first result, which gives a test to see when the intersection of represented integers is $0$:

\begin{theorem}
\label{Test}
Let $S_{\Delta}$ be the set of all forms of discriminant $\Delta<0$ with class number $h(\Delta) \geq 2$. For $i \not = j$ define $Q_{ij}(x,y,z,w):= Q_i(x,y) -Q_j(z,w)$ for $Q_i, Q_j \in S_{\Delta}$. The only non-negative integer $m$ represented by all $Q \in S_{\Delta}$ is $m=0$ if and only if at least one of the $Q_{ij}$ is anisotropic.
\end{theorem}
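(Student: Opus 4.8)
The ($\Leftarrow$) direction is immediate: if some $Q_{ij}$ is anisotropic, then $Q_i(x,y)=Q_j(z,w)$ has no nontrivial integer solution, so $Q_i$ and $Q_j$ share no common positive value, and \emph{a fortiori} no positive integer is represented by every form in $S_\Delta$. The substance is the ($\Rightarrow$) direction, which I would prove in contrapositive form: assuming every $Q_{ij}$ is isotropic, I will produce a single positive integer represented by all forms of discriminant $\Delta$.

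First I would reduce isotropy to an equivalence of the underlying binary forms. Since each $Q_i$ is positive definite, any isotropic vector of $Q_{ij}$ has both halves nonzero, so isotropy means $Q_i$ and $Q_j$ represent a common positive rational; clearing denominators (the forms are integral and homogeneous of degree $2$) this is the same as representing a common positive integer. Over any field, two nondegenerate binary forms of equal determinant that share a nonzero value are isometric, so $Q_{ij}$ is isotropic over $\mathbb{Q}_p$ exactly when $Q_i\cong Q_j$ over $\mathbb{Q}_p$; note that $\det Q_{ij}=\det Q_i\,\det Q_j=(\Delta/4)^2$ is a square, consistent with this reduction. By Hasse--Minkowski, $Q_{ij}$ is isotropic over $\mathbb{Q}$ iff it is isotropic over every $\mathbb{Q}_p$ and over $\mathbb{R}$ (automatic, as the signature is $(2,2)$), hence iff $Q_i\cong Q_j$ over $\mathbb{Q}$. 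Thus \emph{all} $Q_{ij}$ isotropic is equivalent to all forms of $S_\Delta$ being rationally equivalent.

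Next I would invoke genus theory. For binary forms of a fixed discriminant it is classical that rational equivalence coincides with lying in a single genus, and that the number of genera equals $[H(\Delta):H(\Delta)^2]$, where $H(\Delta)$ is the class group. Since $S_\Delta$ consists of \emph{all} reduced forms, requiring them to be rationally equivalent forces a single genus, i.e. $H(\Delta)=H(\Delta)^2$; for a finite abelian group this holds precisely when the order is odd. Hence the hypothesis forces $h(\Delta)$ to be odd.

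Finally, assuming $h(\Delta)$ odd, I would build the common value through Gauss composition and the ideal--class dictionary. Choose generators $g_1,\dots,g_s$ of $H(\Delta)$ of (odd) orders $n_1,\dots,n_s$, and by Dirichlet/Chebotarev select distinct primes $p_i\nmid\Delta$ with $p_i$ represented by a form in class $g_i$. Setting $m=\prod_i p_i^{n_i}$, the correspondence between representations of an integer coprime to the conductor and ideals of that norm shows the classes representing $m$ are exactly $\bigl\{\sum_i (n_i-2k_i)\,g_i : 0\le k_i\le n_i\bigr\}$. Because each $n_i$ is odd, $2g_i$ generates $\langle g_i\rangle$, so the coefficient $(n_i-2k_i)$ runs over all residues modulo $n_i$; the resulting sumset is all of $H(\Delta)$, and $m$ is represented by every class, hence by every form in $S_\Delta$. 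The hard part is exactly this last step---forcing one integer to be hit by every class simultaneously---and the care it requires is twofold: keeping $m$ coprime to $\Delta$ (hence to the conductor) so the representation--ideal correspondence applies cleanly, and controlling imprimitive representations, which the explicit exponents $n_i$ and the odd-order ``stepping by $2$'' argument are designed to handle.
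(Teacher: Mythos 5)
Your $(\Leftarrow)$ direction and your reduction of the isotropy of $Q_{ij}$ to the existence of a common positive integer value of $Q_i$ and $Q_j$ are both fine, but the $(\Rightarrow)$ direction breaks at the genus step. The claim that, for binary forms of a fixed discriminant, rational equivalence coincides with lying in a single genus is only correct when $\Delta$ is fundamental, whereas the theorem is stated for arbitrary $\Delta<0$. What is classical (Cox, Thm.~3.21) is that two forms lie in the same genus if and only if they are rationally equivalent \emph{without essential denominator} (equivalently, $\mathbb{Z}_p$-equivalent for all $p$), and this is strictly stronger than $GL_2(\mathbb{Q})$-equivalence once $\Delta$ has a square factor. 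Concretely, take $\Delta=-36$: the two reduced forms are $x^2+9y^2$ and $2x^2+2xy+5y^2$. The second represents $1$ over $\mathbb{Q}$ (take $x=y=1/3$), so the two forms are rationally equivalent and $Q_{12}$ is isotropic; yet they lie in different genera (among values prime to $3$, the first represents only $1 \bmod 3$ and the second only $2 \bmod 3$), and $h(-36)=2$ is even. So your chain ``all $Q_{ij}$ isotropic $\Rightarrow$ one genus $\Rightarrow$ $h(\Delta)$ odd'' fails, and your final construction, which relies entirely on the orders $n_i$ being odd, never gets off the ground in such cases. (The theorem itself is not threatened: both forms of discriminant $-36$ represent $9$.)

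The repair is to drop the genus detour and argue as the paper does, using only that the principal class is the identity for Gauss composition. Let $Q_1$ be the principal form. Isotropy of $Q_{1k}$ gives, via your own reduction, a positive integer $a_k$ represented by both $Q_1$ and $Q_k$. Since $Q_1\circ Q=Q$ for every class $Q$, the form $Q_1$ represents $\prod_{m\neq k}a_m$, and hence $Q_k$ represents $a_k\cdot\prod_{m\neq k}a_m=\prod_m a_m$; thus this single positive integer lies in $\textrm{int}(S_\Delta)$. Your closing construction with the prime powers $p_i^{n_i}$ attached to generators of odd order is a correct and rather pleasant argument, but what it proves is that $h(\Delta)$ odd forces a nontrivial intersection -- essentially the paper's Theorem \ref{OddClSize2} -- not the equivalence asserted here.
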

We then found that if we have a fundamental discriminant, we can tell exactly when the only integer represented by the forms of a given discriminant is $0$.

\begin{theorem}\label{FDT}[Fundamental Discriminant Theorem]
Let $\Delta<0 \equiv 0, 1 \pmod{4}$ be a fundamental discriminant, and let $S_\Delta$ be the set of all reduced forms of discriminant $\Delta$. If the class number $h(\Delta)$ is even, then $m=0$ is the only integer represented by all forms in $S_\Delta$. 
\end{theorem}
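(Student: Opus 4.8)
The plan is to deduce the theorem from Theorem \ref{Test} together with classical genus theory. Recall that for a fundamental discriminant $\Delta<0$ the classes of primitive reduced forms carry Gauss's composition law, making a finite abelian group $C(\Delta)$ of order $h(\Delta)$, and that the \emph{principal genus}---the classes on which every assigned (genus) character is trivial---is precisely the subgroup of squares $C(\Delta)^2$. The first step is the purely group-theoretic remark that for a finite abelian group $G$ one has $G^2=G$ if and only if $|G|$ is odd; hence $h(\Delta)$ even forces $C(\Delta)^2\subsetneq C(\Delta)$, so there are at least two genera and there exist forms $Q_i,Q_j\in S_\Delta$ lying in distinct genera. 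Since $h(\Delta)$ even gives $h(\Delta)\geq 2$, the hypotheses of Theorem \ref{Test} are in force.

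By Theorem \ref{Test} it then suffices to show that for such a pair the quaternary form $Q_{ij}(x,y,z,w)=Q_i(x,y)-Q_j(z,w)$ is anisotropic over $\Q$; an integer represented by both $Q_i$ and $Q_j$ would produce a nonzero integral zero of $Q_{ij}$, so anisotropy over $\Q$ rules out every common nonzero value at once, with no side conditions on divisibility. I would analyze $Q_{ij}=Q_i\perp(-Q_j)$ locally and invoke Hasse--Minkowski. Writing $Q=ax^2+bxy+cy^2$ and completing the square gives $Q\cong_{\Q_p}\langle a,\,-a\Delta\rangle$, so over each $\Q_p$ the form $Q_{ij}$ is $\Q_p$-equivalent to $\langle a_i,-a_i\Delta,-a_j,a_j\Delta\rangle$. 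This quaternary form has square discriminant and is isotropic over $\Q_p$ exactly when $Q_i$ and $Q_j$ share a nonzero $p$-adic value; since the nonzero values of $\langle a,-a\Delta\rangle$ are $a$ times the norms from $\Q_p(\sqrt{\Delta})$, this happens precisely when $a_i/a_j$ is a local norm, i.e. when the Hilbert symbols satisfy $(a_i,\Delta)_p=(a_j,\Delta)_p$.

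Thus $Q_{ij}$ is isotropic over $\Q_p$ for all $p$ iff $(a_i,\Delta)_p=(a_j,\Delta)_p$ for all $p$, and it is automatically isotropic at the real place since $Q_i,Q_j$ are positive definite (the difference has signature $(2,2)$). The final step is to identify the collection of local symbols $\big((a,\Delta)_p\big)_p$ with the tuple of assigned characters defining the genus of a form of discriminant $\Delta$: two forms lie in the same genus iff all their assigned characters agree, i.e. iff $(a_i,\Delta)_p=(a_j,\Delta)_p$ for every $p$. As $Q_i$ and $Q_j$ were chosen in different genera, some prime $p$ yields $(a_i,\Delta)_p\neq(a_j,\Delta)_p$, so $Q_{ij}$ is anisotropic at that $p$ and hence anisotropic over $\Q$ by Hasse--Minkowski. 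Theorem \ref{Test} then gives that $m=0$ is the only integer represented by all of $S_\Delta$.

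I expect the main obstacle to be this last identification---matching the genus, defined through the assigned characters and the representability of residues modulo $\Delta$, with the tuple of Hilbert symbols $(a,\Delta)_p$---and in particular controlling the place $p=2$ and verifying that a prime dividing the leading coefficient causes no trouble (one may replace $a$ by any nonzero value primitively represented by $Q$, since the local class of $Q$ is well defined). The fundamentality of $\Delta$ enters precisely here, ensuring the clean genus theory (principal genus $=C(\Delta)^2$, and the genus pinned down by these symbols); it is also what makes the condition ``$h(\Delta)$ even'' equivalent to ``more than one genus.''
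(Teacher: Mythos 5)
Your argument is correct in outline, and it reaches the theorem by a genuinely different route from the paper. The paper's proof does not invoke Theorem~\ref{Test} at all: it splits the fundamental discriminants with even class number into three explicit cases ($\Delta=-p_1\cdots p_n$, $\Delta=-8s$, $\Delta=-4s$), in each case exhibits the principal form (whose values prime to $\Delta$ are quadratic residues modulo the odd part $s$ of $\Delta$) and a form $qx^2+bxy+cy^2$ with $\legendre{q}{p_i}=-1$ (whose values prime to $\Delta$ are non-residues), concludes that any common value is divisible by $s$, and then runs an explicit descent ($s\mid m$ and $m$ represented forces $m/s$ or $m/s^2$ represented) to force $m=0$. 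You instead feed Theorem~\ref{Test} a pair of forms in distinct genera --- which exist because $h(\Delta)$ even gives $C(\Delta)^2\subsetneq C(\Delta)$, combined with Gauss's principal genus theorem --- and show $Q_i\perp(-Q_j)$ is anisotropic because the symbols $(a_i,\Delta)_p$ and $(a_j,\Delta)_p$ disagree at some ramified $p$. This is uniform across the three cases, and the $p$-adic anisotropy absorbs in one stroke the descent the paper carries out by hand; note that for the direction you actually use you do not need Hasse--Minkowski, since anisotropy over a single $\Q_p$ already implies anisotropy over $\Q\subset\Q_p$ (and hence over $\Z$, which is what Theorem~\ref{Test} requires). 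The price is that the two classical inputs you flag --- the principal genus theorem (needed to pass from ``$C(\Delta)^2\neq C(\Delta)$'' to ``more than one genus''), and the identification of the assigned genus characters with the Hilbert symbols $(a,\Delta)_p$ for $p\mid\Delta$, including $p=2$ and the normalization via values prime to $\Delta$ --- carry essentially all of the content; the paper's residue/non-residue dichotomy and its choice of $q$ with $\legendre{q}{p_i}=-1$ are precisely these facts re-derived by hand case by case. Both inputs are in Cox for fundamental discriminants, so your proof is legitimately shorter and more conceptual, though not more elementary, and to be complete it should cite or prove that dictionary rather than leave it as the acknowledged ``main obstacle.''
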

Of course, this theorem only concerns the case that $h(\Delta)$ is even. Towards understanding the situation when $h(\Delta)$ is odd, we had the following results confirming that if an integer $n$ were represented, then infinitely-many prime multiples of $n$ would also be represented. 

\begin{theorem}
\label{POnetoAll}
Let $p$ be a prime and let $S_{\Delta}$ be the set of forms of discriminant $\Delta$. If $n$ is represented by all forms $Q \in S_{\Delta}$ and $p$ is represented by some form $Q_i \in S_{\Delta}$, then $np$ is represented by all forms $Q \in S_{\Delta}$.
\end{theorem}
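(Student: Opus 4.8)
The plan is to use the abelian group structure that Gauss composition places on $S_\Delta$, together with the principle that composing two forms multiplies the integers they represent. Recall that the reduced positive definite forms of a fixed discriminant $\Delta<0$ form a finite abelian group (the form class group) under Dirichlet composition, with identity the principal form; I write $*$ for this operation and $[Q]$ for the class of a form $Q$.

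Fix an arbitrary $Q\in S_\Delta$; the goal is to exhibit a representation of $np$ by $Q$. Since the class group is a group, there is a unique class with reduced representative $Q'\in S_\Delta$ satisfying $[Q']*[Q_i]=[Q]$, namely $[Q']=[Q]*[Q_i]^{-1}$. By hypothesis every form in $S_\Delta$ represents $n$, so in particular $Q'$ represents $n$; and $Q_i$ represents $p$ by assumption. The crux is then a composition-of-representations lemma: if $Q'$ represents $n$ and $Q_i$ represents $p$, the composite $Q'*Q_i$ represents $np$. Because $Q'*Q_i$ is equivalent to $Q$ and the set of represented integers is an invariant of the equivalence class, $Q$ represents $np$. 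As $Q$ was arbitrary, $np$ is represented by every form in $S_\Delta$.

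The work, and the main obstacle, lies in the composition lemma, since the classical Dirichlet formula requires the two represented integers to be coprime. When $p\nmid n$ this follows directly from Dirichlet composition: taking representatives $Q'\sim(n_0,b',c')$ and $Q_i\sim(p,b,c)$ that properly represent a divisor $n_0$ of $n$ with $n/n_0$ a perfect square (using that $p$ is prime, so its representation is automatically proper), the composite has leading coefficient $n_0 p$, whence, reinstating the square factor $n/n_0$ by the scaling $(x,y)\mapsto(\ell x,\ell y)$, $Q$ represents $np$. The delicate case is $p\mid n$, where the $\gcd$ hypothesis can genuinely fail and one cannot read $np$ off a leading coefficient.

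To cover that case I would pass to ideals of the imaginary quadratic order of discriminant $\Delta$, under which $S_\Delta$ is isomorphic to the class group of invertible ideals and a form in a given class represents $m$ precisely when its class contains an integral ideal of norm $m$ --- an improper representation $m=\ell^2 m_0$ corresponding to multiplying a norm-$m_0$ ideal by the principal ideal $(\ell)$. The lemma then reduces to multiplicativity of ideal norms and of ideal classes: a norm-$n$ ideal in the class of $Q'$ times a norm-$p$ ideal in the class of $Q_i$ is a norm-$np$ ideal in the class $[Q']*[Q_i]=[Q]$, with no coprimality needed. I expect the only genuine technical point to be the bookkeeping between proper and improper representations, and, for non-fundamental $\Delta$, the restriction to invertible ideals.
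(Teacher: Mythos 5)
Your proposal is correct and follows essentially the same route as the paper: solve $[Q']\ast[Q_i]=[Q]$ in the class group and invoke multiplicativity of represented values under composition. The only difference is that you spend effort justifying the composition lemma in the non-coprime case, which the paper takes for granted; since Gauss composition as defined in the paper is a polynomial identity $Q'(x_1,y_1)\,Q_i(x_2,y_2)=F(B_1,B_2)$, the multiplicativity of represented sets holds with no coprimality or properness hypothesis, so your ideal-theoretic detour, while valid, is not needed.
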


Using this last theorem we were able to address the situation of integers represented by all forms in $S_{\Delta}$, where $h(\Delta)$ is odd: %We then turned to the form class group to determine when nonzero integers were represented by forms of the same discriminant, leading us to:
\begin{theorem}{\label{OddClSize2}}
Let $S_{\Delta}$ be the collection of all forms of discriminant $\Delta$, and suppose $h(\Delta)$ is odd. Then there are infinitely-many positive integers represented by all forms in $S_\Delta$.
\end{theorem}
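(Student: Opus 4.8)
The plan is to reduce the theorem to producing a single positive integer $n$ that is represented by every form in $S_\Delta$, after which Theorem \ref{POnetoAll} finishes the job immediately. Indeed, there are infinitely many rational primes $p$ with $\left(\frac{\Delta}{p}\right) = 1$ (by Dirichlet), and each such split prime is represented by some form of discriminant $\Delta$; hence $np$ is represented by all forms in $S_\Delta$, and as $p$ varies these products $np$ are distinct, yielding infinitely many positive integers represented by every form. So the whole content is the construction of one seed integer.

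To build that seed, I would use the classical dictionary between proper representations of integers prime to $\Delta$ by forms of discriminant $\Delta$ and the ideal classes of the associated order. Enumerate the class group as $C_1,\dots,C_h$, so that $\{C_1,\dots,C_h\}$ is the full group $G$ of order $h=h(\Delta)$, and for each $i$ choose a distinct split prime $p_i\nmid\Delta$ whose prime-ideal factor $\mathfrak p_i$ satisfies $[\mathfrak p_i]=C_i$; such primes exist in abundance by Chebotarev applied to the ring class field, and $p_i\nmid\Delta$ keeps $\mathfrak p_i$ invertible. Set $N=p_1p_2\cdots p_h$. Because $N$ is squarefree and coprime to $\Delta$, every representation of $N$ is proper, and the classes of forms representing $N$ correspond exactly to the ideal classes of norm $N$.

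The heart of the argument, and the precise point where oddness of $h(\Delta)$ is essential, is to show these norm-$N$ classes exhaust $G$. Each ideal of norm $N$ has the shape $\prod_i \mathfrak q_i$ with $\mathfrak q_i\in\{\mathfrak p_i,\overline{\mathfrak p_i}\}$, so its class is $\prod_i C_i^{\epsilon_i}$ with $\epsilon_i\in\{\pm1\}$. Writing $P=\prod_i C_i$, I would rewrite $\prod_i C_i^{\epsilon_i}=P\cdot D^{-2}$ with $D=\prod_{i:\,\epsilon_i=-1}C_i$. As the sign pattern varies, $D$ ranges over all of $G$ (already the singleton subsets realize every element), and since $h$ is odd the map $D\mapsto D^{-2}$ is a bijection of $G$; hence the achievable classes form the coset $P\cdot G=G$. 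Every class therefore occurs, so $N$ is represented by a form in every class, i.e.\ by all forms in $S_\Delta$.

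The main obstacle I anticipate is bookkeeping around the form–ideal correspondence rather than anything deep: I must ensure the representations produced are genuine integer representations of a positive integer (automatic for positive definite forms), that proper versus improper representations are handled (which is why $N$ is taken squarefree and prime to $\Delta$), and that the orientation convention identifying form classes with ideal classes does not disturb the conclusion, which it does not, since the statement ``all classes are hit'' is symmetric under inversion. The one hypothesis that is genuinely used is the oddness of $h(\Delta)$, entering solely through the bijectivity of squaring on $G$; this is exactly the feature that fails when the class number is even, consistent with the $m=0$ conclusion of Theorem \ref{FDT}.
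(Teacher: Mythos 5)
Your argument is correct, but it takes a genuinely different route from the paper's. The paper stays entirely on the form side: since $h(\Delta)$ is odd there are no elements of order $2$, so the nonprincipal reduced forms pair off into inverse pairs sharing the same $x^2$- and $y^2$-coefficients; each form represents those two coefficients, and by the surjectivity of squaring on an odd-order group (Lemma \ref{UniqueSq}) any target class $Q_j$ equals $Q_k^2$ for some $k$, so composing all the nonprincipal forms with one copy of $Q_k^{-1}$ swapped for $Q_k$ collapses to $Q_j$ while still representing the explicit integer $\prod_i a_ic_i$. That yields a concrete seed with no analytic input, after which the paper multiplies by squares (your alternative finish via Theorem \ref{POnetoAll} and split primes is equally valid). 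Your construction instead passes through the form--ideal dictionary: you choose, via Chebotarev in the ring class field, distinct split primes $p_i\nmid\Delta$ hitting every ideal class, and observe that the classes of ideals of norm $N=\prod p_i$ are exactly the $P\cdot D^{-2}$ with $D$ ranging over the whole group, which is everything because $x\mapsto x^{-2}$ is a bijection in odd order. The pivot is identical in both proofs --- oddness enters only through bijectivity of squaring --- but your version pays for Chebotarev and the order-versus-maximal-order bookkeeping, while buying a squarefree seed coprime to $\Delta$ (so all representations are automatically proper); the paper's version is elementary, self-contained, and delivers the explicit seed $\prod_i a_ic_i$ promised in the remark after the theorem statement.
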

Note: When this statement is proved, we will be more explicit about a nonzero positive integer represented by all forms. Also, Theorem \ref{OddClSize2} has a corollary crucial for the introductory consideration of forms of different discriminants:
\begin{corollary}{\label{OddClSizeCor}}
Let $Q_1$ and $Q_2$ be forms of discriminants $\Delta_1$ and $\Delta_2$ (respectively), and suppose that $h(\Delta_1)$ and $h(\Delta_2)$ are both odd. Then there are infinitely many $m$ represented by both $Q_1$ and $Q_2$.
\end{corollary}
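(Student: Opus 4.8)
The plan is to reduce the corollary to the single-discriminant results by producing a single integer that is simultaneously represented by \emph{every} form of $S_{\Delta_1}$ and \emph{every} form of $S_{\Delta_2}$; such an integer is in particular represented by $Q_1$ and by $Q_2$, and infinitude will follow at the end by multiplying through by a suitable prime. Write $T_i$ for the set of positive integers represented by all forms in $S_{\Delta_i}$. The engine is Theorem \ref{POnetoAll}: if $n\in T_i$ and a prime $p$ is represented by some form of discriminant $\Delta_i$, then $np\in T_i$. Since $h(\Delta_i)$ is odd there is a single genus, so a prime $p$ coprime to $\Delta_i$ is represented by some form of $S_{\Delta_i}$ exactly when $\left(\frac{\Delta_i}{p}\right)=1$, i.e.\ when $p$ splits in $\mathbb{Q}(\sqrt{\Delta_i})$. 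Consequently $T_i$ is closed under multiplication by any prime splitting in $\mathbb{Q}(\sqrt{\Delta_i})$.

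First I would revisit the construction behind Theorem \ref{OddClSize2} to build $n_1\in T_1$ all of whose prime factors split in $\mathbb{Q}(\sqrt{\Delta_2})$, and symmetrically $n_2\in T_2$ all of whose prime factors split in $\mathbb{Q}(\sqrt{\Delta_1})$. The key point is that an element of $T_1$ can be produced as $n_1=\prod_i p_i^{E_i}$, where the classes of the $p_i$ generate the odd-order class group of $\Delta_1$ and each $E_i$ is large: because the group has odd order (so $2$ is invertible modulo the order of each class), the representations of such an $n_1$ realize every class, and hence $n_1$ is represented by all forms. The extra requirement is only that the generating primes $p_i$ additionally split in $\mathbb{Q}(\sqrt{\Delta_2})$.

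With such $n_1,n_2$ in hand, set $m=n_1n_2$. Factoring $n_2$ into primes, each of which splits in $\mathbb{Q}(\sqrt{\Delta_1})$, and applying Theorem \ref{POnetoAll} one prime at a time starting from $n_1\in T_1$, I obtain $m\in T_1$; the symmetric argument using that the prime factors of $n_1$ split in $\mathbb{Q}(\sqrt{\Delta_2})$ gives $m\in T_2$. Thus $m$ is represented by every form of both discriminants, in particular by $Q_1$ and $Q_2$. Finally, by Dirichlet there are infinitely many primes $\ell$ with $\left(\frac{\Delta_1}{\ell}\right)=\left(\frac{\Delta_2}{\ell}\right)=1$; each such $\ell$ preserves membership in $T_1\cap T_2$, so all the integers $m\ell^k$ with $k\ge 0$ are represented by both $Q_1$ and $Q_2$, giving infinitely many values.

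The main obstacle is the controlled construction of the second paragraph: finding primes lying in prescribed ideal classes of $\Delta_1$ that simultaneously split in $\mathbb{Q}(\sqrt{\Delta_2})$. This is a Chebotarev condition in the compositum of the ring class field $H_{\Delta_1}$ with $\mathbb{Q}(\sqrt{\Delta_2})$, and it is precisely here that the interaction between two different discriminants enters. I expect to resolve it by linear disjointness: because the class group of $\Delta_1$ has odd order, its Galois group over $\mathbb{Q}$ is generalized dihedral with unique quadratic subfield $\mathbb{Q}(\sqrt{\Delta_1})$, so when $\Delta_1$ and $\Delta_2$ determine different quadratic fields, $H_{\Delta_1}$ and $\mathbb{Q}(\sqrt{\Delta_2})$ meet only in $\mathbb{Q}$, and Chebotarev shows the primes splitting in $\mathbb{Q}(\sqrt{\Delta_2})$ equidistribute among the classes of $\Delta_1$ and thus generate the class group. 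When $\Delta_1$ and $\Delta_2$ determine the same quadratic field the extra splitting condition is automatic, so no auxiliary hypothesis is needed. This dependence on Chebotarev foreshadows the full class field theory invoked later for arbitrary collections of differing discriminants.
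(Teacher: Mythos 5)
Your argument is correct in outline, but it takes a far heavier route than the paper, which disposes of the corollary in four lines. The paper simply invokes Theorem \ref{OddClSize2} twice to get nonzero integers $\alpha$ represented by every form in $S_{\Delta_1}$ and $\beta$ represented by every form in $S_{\Delta_2}$, notes that composing each form with the principal class shows $\alpha^2$ and $\beta^2$ are likewise universally represented for their respective discriminants, and then uses the identity $Q(kx,ky)=k^2Q(x,y)$ to conclude that the single number $\alpha^2\beta^2$ (being $\alpha^2$ times a square and also $\beta^2$ times a square) lies in both intersections; multiplying by further squares gives infinitude. No control over prime factorizations is needed, because square multiples are free. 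You instead insist on building an element of the intersection whose prime factors are individually compatible with both discriminants, which forces you into the forms--ideals dictionary (to know that $\prod p_i^{E_i}$ with generating classes and large exponents is represented by every class --- a true but nontrivial fact you assert rather than prove, and which the paper only develops later in Section \ref{CFT}) and into Chebotarev plus linear disjointness of the ring class field $H_{\Delta_1}$ with $\mathbb{Q}(\sqrt{\Delta_2})$. That disjointness argument, via the uniqueness of the quadratic subfield of a generalized dihedral extension when the class group has odd order, is correct, and your approach does buy something: an explicit description of a large multiplicatively closed set of primes by which one can propagate elements of the joint intersection, foreshadowing the class-field-theoretic analysis of Theorem \ref{number of ideals}. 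But as a proof of the stated corollary it is overkill, and if you pursue it you must actually supply the representation-by-all-classes lemma and handle primes dividing the conductors, neither of which is needed for the square-multiple argument.
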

Towards additional results when $\Delta_1 \neq \Delta_2$, we then looked at the relationship between the local representation of integers and the integral representation of multiples of these integers. By Hasse-Minkowski a local representation implies a $\mathbb Q$ representation; by clearing denominators as necessary this guarantees that a nonzero, \textit{square} multiple of any locally represented integer will be integrally represented. But towards guaranteeing a nonsquare multiple will be represented we have:
\begin{theorem} {\label{LocalOneD}}
Let $Q$ be a quadratic form, and suppose $m$ is locally represented by $Q$. Then some nonzero, nonsquare multiple of $m$ is represented by $Q$.
\end{theorem}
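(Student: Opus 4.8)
The plan is to start from the square multiple that is already available and then \emph{inject} a single prime factor so as to change the squarefree part. We may assume $m \neq 0$, since otherwise the statement is vacuous. Because $m$ is locally represented by $Q$, Hasse--Minkowski guarantees a rational representation $Q(x_0,y_0) = m$ with $x_0, y_0 \in \mathbb{Q}$; writing $x_0 = a/d$ and $y_0 = b/d$ over a common denominator $d$ and multiplying through by $d^2$ yields integers $a,b$ with $Q(a,b) = d^2 m$. Thus $Q$ integrally represents the square multiple $d^2 m$, which is exactly the starting point recalled just before the statement.

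To turn this into a nonsquare multiple, I would use multiplicativity of representation against the principal form $P$ of the same discriminant $\Delta$. First I would produce a prime $p$ represented by $P$ with $p \nmid dm$. The primes represented by the principal form are precisely the split primes whose prime ideals in the relevant order are principal, and by the Chebotarev density theorem applied to the associated ring class field there are infinitely many such $p$; discarding the finitely many dividing $dm$ leaves the desired prime. Since $[P]$ is the identity of the form class group, Gauss (Dirichlet) composition gives $[P]\cdot[Q] = [Q]$, and because $\gcd(p, d^2 m) = 1$ the composition identity lets us combine the representations $P(u,v) = p$ and $Q(a,b) = d^2 m$ to conclude that $Q$ represents $p \cdot d^2 m$. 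Equivalently, if an invertible ideal $\mathfrak{b}$ in the class of $Q$ has norm $d^2 m$ and $(\alpha)$ is a principal ideal of norm $p$, then $\mathfrak{b}(\alpha)$ lies in the same class and has norm $p\, d^2 m$.

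It then remains to check that $N := p\, d^2 m$ has the required properties. It is a multiple of $m$, namely $N = (d^2 p)\,m$; it is nonzero because $m$, $d$, and $p$ are all nonzero; and it is not a perfect square, since $p \nmid dm$ forces the $p$-adic valuation $v_p(N) = 2\,v_p(d) + v_p(m) + 1 = 1$ to be odd. Hence $N$ is a nonzero, nonsquare multiple of $m$ represented by $Q$, as desired.

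I expect the main obstacle to be justifying the multiplicativity step cleanly rather than the arithmetic at the end. Two points need care: (i) the composition/ideal argument presumes $Q$ is primitive (which holds for the reduced forms under consideration) and uses the coprimality $\gcd(p, d^2 m) = 1$ to set up concordant forms, so the choice of $p$ must be coordinated with that requirement; and (ii) for non-fundamental $\Delta$ the order is non-maximal, so one must work with proper (invertible) ideals and the ring class field, ensuring in particular that $p$ avoids the conductor. Once the existence of a suitable prime represented by the principal form is secured via class field theory, the remainder follows formally.
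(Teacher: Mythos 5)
Your proof is correct, but it takes a genuinely different route from the paper's. The paper invokes Cassels' local-global theorem at the level of the genus: since $m$ is locally represented, it is \emph{integrally} represented by some form $Q_i$ in the genus of $Q$; writing $Q = Q_i \circ Q_j$ for a suitable $Q_j$, every value $n$ of $Q_j$ yields a representation of $mn$ by $Q$, and the argument concludes by asserting that no binary quadratic form represents only squares. You instead use only the weaker Hasse--Minkowski statement to obtain $Q(a,b) = d^2 m$ for the form $Q$ itself, and then inject a prime: choosing $p \nmid dm$ represented by the principal form $P$ (via Chebotarev applied to the ring class field) and using $[P][Q] = [Q]$ under composition gives a representation of $p\,d^2 m$, whose $p$-adic valuation is odd, so the multiplier $p\,d^2$ is visibly nonsquare. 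Your version buys explicit control of the squarefree part and avoids the paper's unjustified (though true) claim that a binary form must represent a nonsquare value; the cost is an appeal to class field theory to produce a prime represented by the principal form, whereas the paper stays within genus theory and composition. Both arguments lean on the same multiplicativity of representations under Gauss composition -- a fact the paper uses freely elsewhere (e.g.\ in Theorem \ref{POnetoAll}) -- and both implicitly assume $m > 0$ so that the archimedean place causes no obstruction; your remarks about primitivity and about avoiding the conductor for non-fundamental $\Delta$ are the right points to flag, and they are consistent with the paper's standing conventions on ``forms.''
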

\begin{theorem} {\label{LocalTwoD}}
Let $Q_1$ and $Q_2$ be forms with discriminants $\Delta_1$ and $\Delta_2$, respectively. Let $m$ be locally represented by $Q_1$ and $Q_2$, and let $n \not \equiv 0 \pmod{m}$ be represented by all forms of discriminant $\Delta_1$ and $\Delta_2$. Then $mk$ is represented by $Q_1$ and $Q_2$, where $k$ is a non-square integer.
\end{theorem}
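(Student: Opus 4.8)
The goal is to produce a single nonsquare integer $k$ for which the one integer $mk$ lies in the value set of \emph{both} $Q_1$ and $Q_2$. The plan is to first land a common \emph{square} multiple of $m$ in both value sets, and then twist it by a common value of the two principal forms so as to make the multiplier nonsquare. Since $m$ is locally represented by $Q_1$ and $Q_2$, Hasse--Minkowski gives rational representations $Q_1(x,y)=m$ and $Q_2(x,y)=m$; clearing denominators produces nonzero integers $a,b$ with $Q_1$ representing $ma^2$ and $Q_2$ representing $mb^2$. Replacing $(x,y)$ by $(bx,by)$ in the first and by $(ax,ay)$ in the second, homogeneity of degree two shows that both forms represent the single integer $m(ab)^2$. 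This already gives a common square multiple of $m$, using nothing beyond local solvability. (Note that Theorem~\ref{LocalOneD} yields nonsquare multiples $mk_1$ of $m$ by $Q_1$ and $mk_2$ by $Q_2$ \emph{separately}; the real content here is forcing a single common multiple, which is why the square-multiple-plus-twist route is preferable to combining $k_1$ and $k_2$ directly, as the latter would require $k_1k_2$ to be a square.)

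Next I would use $n$ to bridge the two discriminants. For forms of different discriminant the difficulty is precisely that the principal forms $\phi_{\Delta_1}$ and $\phi_{\Delta_2}$ need not represent any common integer; the hypothesis that $n$ is represented by all forms of discriminant $\Delta_1$ and $\Delta_2$ hands us such a common value, since in particular $\phi_{\Delta_1}$ and $\phi_{\Delta_2}$ both represent $n$. Because $Q_i$ and $\phi_{\Delta_i}$ compose to $Q_i$ (the principal class is the identity of Gauss composition), multiplicativity of representation lets me multiply the value $m(ab)^2$ already represented by $Q_i$ by the value $n$ represented by $\phi_{\Delta_i}$, so that each of $Q_1$ and $Q_2$ represents $m(ab)^2 n$. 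Setting $k=(ab)^2 n$ then gives a common multiple $mk$ of the desired shape. One technical point to dispatch here is the coprimality hypothesis in Dirichlet composition; I would arrange it by first passing to representations of $m(ab)^2$ and of $n$ by integers coprime to $\Delta_1\Delta_2$ (possible since a primitive form represents integers in every admissible residue class) or by absorbing a suitable square factor.

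The main obstacle is guaranteeing that $k$ is genuinely \emph{nonsquare}, equivalently that the twisting factor lies in a nonsquare class. If $n$ is nonsquare we are finished immediately. I expect the role of the hypothesis $n \not\equiv 0 \pmod{m}$ is to keep $n$ from being absorbed into $m$ and thereby to force $mk = m(ab)^2 n$ out of the class of a perfect square; this is the step that will require the most care, since, as simple numerical examples show, $m\nmid n$ alone does not literally prevent $mn$ from being a square. Should $n$ itself fall in a bad square class, I would instead replace $n$ by a common nonsquare value of $\phi_{\Delta_1}$ and $\phi_{\Delta_2}$: by Chebotarev there are infinitely many primes $p$ splitting completely in both ring class fields $H_{\Delta_1}$ and $H_{\Delta_2}$, each such $p$ is represented by both principal forms, and a prime is never a square.

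With such a $p$ in hand, $k=(ab)^2 p$ is nonsquare and $mk$ is represented by both $Q_1$ and $Q_2$, which completes the argument. Invoking class field theory at this last step is consistent with the paper's stated strategy for forms of differing discriminant, and it is the natural place for it to enter: the existence of a common value of the two principal forms is exactly the global obstruction that separates the single-discriminant results from the multi-discriminant setting, and here that obstruction is supplied either by the given integer $n$ or, failing nonsquareness, by a Chebotarev density argument.
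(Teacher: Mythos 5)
Your argument is correct at the same level of rigor as the paper's, but it follows a genuinely different route. The paper never clears denominators from a Hasse--Minkowski solution; it invokes Cassels' theorem that a locally represented integer is integrally represented by some form in the \emph{genus}, so $m$ itself is represented by genus-mates $Q_r$ (discriminant $\Delta_1$) and $Q_s$ (discriminant $\Delta_2$), and then transfers back to $Q_1,Q_2$ by composing with the complementary forms $Q_g,Q_h$ satisfying $Q_r\circ Q_g=Q_1$ and $Q_s\circ Q_h=Q_2$. Because $Q_g$ and $Q_h$ are uncontrolled forms, the hypothesis on $n$ is load-bearing there: the paper chooses an auxiliary odd prime $p$ with $\legendre{\Delta_1}{p}=\legendre{\Delta_2}{p}=1$ and $\gcd(m,p)=1$, uses Theorem \ref{POnetoAll} to put $np$, and hence $n^2p$, into the value set of \emph{every} form of either discriminant --- in particular of $Q_g$ and $Q_h$ --- and takes $k=n^2p$, which is nonsquare unconditionally because $p$ occurs to odd multiplicity. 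In your version the integer $m(ab)^2$ is represented by $Q_1$ and $Q_2$ themselves, so you only ever need a common value of the two \emph{principal} forms; this makes $n$ nearly superfluous but forces the case split on whether $n$ is a square, which you resolve with Chebotarev in the compositum of the two ring class fields. That is a heavier tool than the paper requires (its auxiliary prime need only be represented by \emph{some} form of each discriminant, a quadratic-residue condition supplied by Dirichlet, not by the principal forms), and you could avoid it by borrowing the paper's device: with the same auxiliary $p$, Theorem \ref{POnetoAll} places $np$, hence $n^2p$, in the value set of each principal form, and $k=(ab)^2n^2p$ is nonsquare with no case analysis. Your observation that $n\not\equiv 0\pmod m$ does not by itself force nonsquareness is accurate --- the paper sidesteps this exactly by the odd multiplicity of $p$ --- and both proofs invoke multiplicativity of Gauss composition at the same informal level. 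What your approach buys is the replacement of the genus-theoretic input (Cassels) by elementary denominator-clearing; what it costs is the need for the two principal forms specifically to share a nonsquare value.
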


Last, to consider forms of different discriminants and with at least one even class group, we used a relationship between representation by a quadratic form and splitting behavior in the Hilbert class field: %This led to:%l relationship between primes splitting in fields and representation by forms. When modular conditions are not enough to tell us when a prime is represented by a given form, we can look at splitting conditions in the Hilbert class field.

\begin{theorem}
\label{number of ideals}
Let $\Delta=s^2n$, where $n<0$ is square-free, and let $Q$ be some form of discriminant $\Delta$ and order $m$. Let $K=\Q(\sqrt{n})$ and $L$ be the Hilbert class field of $K$. A prime $p$ not dividing the discriminant of $K$ is represented by $Q$ if and only if $p$ splits into $\frac{[L:\Q]}{m}$ factors in $L$.
\end{theorem}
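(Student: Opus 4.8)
The plan is to translate ``represented by $Q$'' into a statement about how ideals split in $K$, and then to push that forward into $L$ using class field theory. First I would reduce to the case where $\Delta$ is the fundamental discriminant $d_K$ of $K$, so that the form class group $C(\Delta)$ is identified with the ideal class group $\mathrm{Cl}(K)$ via the standard correspondence sending a reduced form $ax^2+bxy+cy^2$ to the class of the ideal $\bigl(a,\tfrac{-b+\sqrt{\Delta}}{2}\bigr)$. Under this identification, the integers represented by $Q$ are exactly the norms of ideals lying in the class $[Q]$. Hence a prime $p$ not dividing $d_K$ is represented by $Q$ precisely when $p$ splits in $K$, say $p\mathcal{O}_K=\mathfrak p\bar{\mathfrak p}$, with one of $\mathfrak p,\bar{\mathfrak p}$ in the class corresponding to $[Q]$; in particular the hypothesis $\mathrm{ord}([Q])=m$ translates into $\mathrm{ord}([\mathfrak p])=m$ in $\mathrm{Cl}(K)$.

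Next I would invoke the defining property of the Hilbert class field: $L/K$ is abelian with $\mathrm{Gal}(L/K)\cong\mathrm{Cl}(K)$ under the Artin map, so that $[L:K]=h(\Delta)$ and $[L:\Q]=2\,h(\Delta)$, and the Frobenius at an unramified prime $\mathfrak p$ is its Artin symbol, whose order in $\mathrm{Gal}(L/K)$ equals $\mathrm{ord}([\mathfrak p])$. Because $L/K$ is Galois and $p$ is unramified, every prime of $L$ above $\mathfrak p$ has residue degree $m$ over $K$, so there are exactly $h(\Delta)/m$ of them, and likewise $h(\Delta)/m$ primes above $\bar{\mathfrak p}$ since $[\bar{\mathfrak p}]=[\mathfrak p]^{-1}$ has the same order. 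Summing gives $2\,h(\Delta)/m=[L:\Q]/m$ primes of $L$ above $p$, which proves the forward direction.

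For the converse I would run this computation backwards, and this is where I expect the main obstacle to lie. Given that $p$ splits into $[L:\Q]/m$ factors in $L$, I would first rule out the inert case in $K$: an inert prime is principal in $\mathcal{O}_K$, hence splits completely in $L/K$ and contributes $h(\Delta)=[L:\Q]/2$ factors, which must be separated from the genuinely represented primes. One then concludes that $p$ splits in $K$ with $\mathrm{ord}([\mathfrak p])=m$, so $p$ is represented by a form whose class has order $m$. The delicate point is that the factor count only detects the \emph{order} of the Artin symbol, not the class itself, so identifying $Q$ specifically (rather than merely some form of order $m$) needs a class of order $m$ to be determined up to inversion by its order -- which holds, for example, when $\mathrm{Cl}(K)$ is cyclic, since there $[Q]$ and $[Q]^{-1}$ give opposite forms with identical represented primes. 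I therefore expect the careful handling of this order-versus-class identification, together with the coincidence at $m=2$ where split primes with an order-two class and inert primes yield the same number of factors, to be the crux of making the equivalence precise.
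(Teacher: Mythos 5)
Your forward direction is essentially the paper's own argument: split $p$ in $\mathcal{O}_K$, match the class of a prime $\mathfrak{p}$ above $p$ with the class of $Q$ under the form--ideal correspondence, use the Artin isomorphism to identify the order $m$ of that class with the common residue degree of the primes of $L$ above $\mathfrak{p}$, and conclude that there are $[L:K]/m$ of them over each of the two primes of $K$, hence $[L:\Q]/m$ in total. Where you genuinely part ways with the paper is the converse, which the paper dismisses in one sentence (``since all of the homomorphisms in the forward direction were isomorphisms, this direction follows similarly''); the two obstacles you isolate are real and are not addressed there. The factor count determines only the order of the Frobenius, so one recovers at best that $p$ is represented by \emph{some} form whose class has order $m$ --- tolerable, since a class and its inverse yield improperly equivalent forms representing the same primes, and the paper's own restatement in Section 4 indeed weakens the claim to ``some such $Q$'' --- but still short of pinning down $Q$ when several classes share the order $m$. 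More seriously, your inert-prime observation is an actual counterexample to the stated equivalence when $m=2$: an inert prime generates a principal ideal of $\mathcal{O}_K$, splits completely in $L/K$, and so has $[L:K]=[L:\Q]/2$ factors in $L$, exactly the count attached to order-$2$ forms. Concretely, for $\Delta=-20$ the prime $13$ is inert in $\Q(\sqrt{-5})$ and splits into $2=[L:\Q]/2$ factors in the Hilbert class field $\Q(\sqrt{-5},i)$, yet is not represented by the order-$2$ form $2x^2+2xy+3y^2$. So the converse needs either the additional hypothesis that $p$ splits in $K$ or finer splitting data than the number of factors; flagging this is an improvement on the paper, not a defect of your proof. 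Your explicit reduction to the fundamental discriminant is likewise necessary and left implicit in the paper: for non-fundamental $\Delta$ the group $C(\Delta)$ corresponds to a ring class field rather than the Hilbert class field.
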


This paper is organized as follows: Section \ref{Background} provides general background on quadratic forms needed for the proofs of Theorems \ref{Test} through \ref{LocalTwoD}. The next section will then provide the proofs of those theorems. Section \ref{CFT} provides additional background on class field theory needed for the following section's proof of Theorem \ref{number of ideals}. Last, Section \ref{Examples} will include examples.\\

\textbf{Acknowledgements:} This research was supported by the National Science Foundation (DMS-1461189). We additionally thank Jeremy Rouse for his helpful discussions and suggestions.
\section{Background}
\label{Background}
An \textbf{integral binary quadratic form} is a homogeneous polynomial
\begin{eqnarray*}
Q: \mathbb Z^2 & \to & \mathbb Z \\
(x,y) & \mapsto& ax^2+bxy+cy^2
\end{eqnarray*}
where $a,b,c \in \mathbb Z$. An integer $m$ is \textbf{represented} by a form $Q$ if there exist integers $x$ and $y$ such that $Q(x,y)=m$, an we say $m$ is \textbf{properly represented} by $Q$ if $x$ and $y$ are relatively prime. Last, an integer $m$ is \textbf{locally represented} by a form $Q$ if $m$ is represented over each $p$-adic ring $\mathbb{Z}_p$.

Using the notation of \cite{Lam}, we let $D(Q) = \{ m \in \mathbb Z-\{0\} : \exists (x,y) \in \mathbb Z^2, Q(x,y)=m \}$. Then if we let $S = \{Q_1,\ldots, Q_n\}$ be a finite set of binary quadratic forms, we will define the \textbf{intersection of $S$} $\textrm{int}(S)$ as $$\textrm{int}(S) = \{ m \in \mathbb Z : m \in D(Q) \textrm{ for all } Q \in S \}.$$%to be the set of numbers that are represented by all forms in $S$. 
We say $\textrm{int}(S)$ is \textbf{trivial} when $\textrm{int}(S) = \emptyset$. Equivalently, an intersection $\textrm{int}(S)$ is trivial if and only if the only integer $m$ represented by all $Q \in S$ is $m=0$. %\textcolor{red}{Move representation earlier, so that way go from reduced into equivalence relations, etc.} 

We say that $Q$ is \textbf{positive definite} if $Q(x,y)>0$ for all $(x,y) \neq (0,0)$. Moreover, $Q$ is \textbf{primitive} if $\gcd(a,b,c)=1$, and is \textbf{reduced} if the following two conditions hold:
\begin{itemize}
\item[(R1)] $\vert b \vert \leq a \leq c$,
\item[(R2)] $b \geq 0$ if $\vert b \vert =a$ or if $a=c$.
\end{itemize}
Henceforth, by ``form'' we mean ``integral, binary, positive definite, {reduced}, primitive quadratic form.'' \\

\noindent The \textbf{discriminant} of $Q$ is defined to be $\Delta := b^2-4ac$.  Note that $\Delta \equiv 0,1 \pmod{4}$ always, and when $Q$ is positive definite, $\Delta < 0$. A discriminant $\Delta$ is said to be \textbf{fundamental} if and only if either
\begin{itemize}
\item[(i)] $\Delta \equiv 1 \pmod{4}$ is square free, or
\item[(ii)] $\Delta =4n$, with $n \equiv 2, 3 \pmod{4}$ square free.
\end{itemize}
One easily can see that for fixed discriminant $\Delta$ (fundamental or otherwise) there are only finitely many forms $Q$ of that discriminant. We therefore denote by $S_\Delta$ this finite collection of forms. \\
%An \textbf{integral binary quadratic form} is a homogeneous polynomial $Q: \mathbb{Z}^2 \rightarrow \mathbb{Z}$ such that $Q(x,y) = ax^2 + bxy + cy^2$, where $a,b,c \in \mathbb{Z}$. An integral binary quadratic form $Q$ is said to be \textbf{positive-definite} if 
%\begin{itemize}
%\item[(i)] $Q(x,y) = 0 \Leftrightarrow x=0$ and $y=0$,
%\item[(ii)] For all $(x,y)$ such that $x \not = 0$ and/or $y \not = 0$, $Q(x,y)>0$.
%\end{itemize}
%\noindent The \textbf{discriminant} of an integral binary quadratic form $Q(x,y) = ax^2 + bxy + cy^2$ is $\Delta = b^2 - 4ac$.
%$\Delta$ is a \textbf{fundamental discriminant} if and only if either:
%\begin{itemize}
%\item[(i)] $\Delta \equiv 1 \pmod{4}$ and is square-free, or,
%\item[(ii)] $\Delta=4n$ for $n \in \mathbb{Z}$ such that $n \equiv 2,3 \pmod{4}$ and $n$ is square-free.
%\end{itemize}
%We say such a form $Q(x,y) = ax^2 + bxy + cy^2$ is  \textbf{reduced} if 
%\begin{itemize}
%\item[(i)] $|b|\leq a \leq c$,
%\item[(ii)] $b \geq 0$ if $|b|=a$ or if $a=c$.
%\end{itemize}

%A binary quadratic form $Q(x,y)=ax^2+bxy+cy^2$ is \textbf{primitive} if the $gcd(a,b,c)=1$. 

We can restrict our attention to ``forms'' for the following reason. There is an equivalence relation between all positive definite quadratic forms of the same discriminant, given by a $\mathbb Z$-linear change of variable. If that change-of-base matrix can be written with determinant $1$ the equivalence is said to be \textbf{proper}; otherwise, it is said to be \textbf{improper}. It is clear that equivalent forms represent exactly the same integers. But moreover, if we restrict ourselves to \textbf{proper equivalence classes} each class will have exactly one reduced form \cite[Thm 2.8]{Cox}.\\ %\textcolor{red}{Need to cite that Cox theorem}.\\ %\textbf{Proper equivalence} and \textbf{improper equivalence} are defined using transformations via integral matrices of determinant $1$ and $-1$ respectively. 
%Two forms are in the same \textbf{proper equivalence class} if they are properly equivalent. Equivalent forms represent exactly the same integers. By this equivalence relation, there is exactly one reduced form in each proper equivalence class. From here on, we will use ``forms'' to refer to primitive, positive definite integral binary quadratic forms.

There is one well-documented fact regarding the representation of an integer $m$ by a form $Q$ and the discriminant $\Delta$ of $Q$. Since we will refer to this result often, we state it now (with wording taken from \cite[Lemma 2.5]{Cox}):
\begin{lemma}
\label{Cox Lemma 2.5}
Let $\Delta \equiv 0,1 \pmod{4}$ be an integer and $m$ be an odd integer relatively prime to $\Delta$. Then $m$ is properly represented by a primitive form of discriminant $\Delta$ if and only if $\Delta$ is a quadratic residue modulo $m$.  
\end{lemma}
\noindent
For fixed discriminant $\Delta<0$, the finite collection of proper equivalence classes of forms (with representatives the reduced forms) form an abelian group, called the \textbf{class group of $\Delta$}. We denote the class group as $C(\Delta)$, its size as $h(\Delta)$ and with slight abuse of notation we use $Q(x,y)$ to mean $[Q(x,y)] \in C(\Delta)$. The operation on this group is composition of forms--first defined by Gauss in \cite{Gauss} and which was much later generalized by Bhargava in \cite{Manjul}. For $Q_1(x_1,y_1)=a_1x^2+b_1xy+c_1y^2$, and $Q_2(x_2,y_2)=a_2x^2+b_2xy+c_2y^2 \in C(\Delta)$ we define 
\begin{eqnarray*}
Q_1(x_1,y_1) \circ Q_2(x_2,y_2) & = & F(B_1(x_1,y_1;x_2,y_2),B_2(x_1,y_1;x_2,y_2))
\end{eqnarray*}
where $B_i(x_1,y_1;x_2,y_2)=a_ix_1x_2+b_ix_1y_2+c_iy_1x_1+d_iy_1y_2$, for $i=1,2$. %\textcolor{red}{Maybe instead of $f,g$ we do $Q_1,Q_2$, $x_1,x_2$...}%This provides a well-defined operation if we restrict composition so that $a_1b_2-a_2b_1=f(1,0)$ and $a_1c_2-a_2c_1=g(1,0)$.
%Let $f(x,y)$ and $g(x,y)$ be forms of discriminant $\Delta$. Then a form $F(x,y)$ is their \textbf{composition} if $f(x,y)g(z,w)=F(B_1(x,y;z,w),B_2(x,y;z,w))$, where $B_i(x,y;z,w)=a_ixz+b_ixw+c_iyx+d_iyw$, for $i=1,2$. This provides a well-defined operation if we restrict composition so that $a_1b_2-a_2b_1=f(1,0)$ and $a_1c_2-a_2c_1=g(1,0)$.

%Let $\Delta < 0$ be fixed. The \textbf{class group} of $\Delta$, denoted $C(\Delta)$, is the finite Abelian group formed by the set of proper equivalence classes of forms of discriminant $\Delta$. The order of the class group, $h(\Delta)$, is the number of reduced forms of discriminant $\Delta$, and the group operation is the composition of forms. 

The identity element of $C(\Delta)$ is the class containing the $\textbf{principal form}$, defined by
\begin{eqnarray*}
x^2-\frac{\Delta}{4}y^2, & \Delta \equiv 0 \pmod{4},\\
x^2+xy+\frac{1-\Delta}{4}y^2, & \Delta \equiv 1 \pmod{4}.
\end{eqnarray*}
%The class that contains the principal form of discriminant $\Delta$ is the identity element of the class group $C(\Delta)$. The principal class is the only element of a given class group that represents the integer $1$. 
Given a form $Q(x,y)=ax^2+bxy+cy^2$, the \textbf{inverse} of $Q$ is $Q^{-1}(x,y)=ax^2-bxy+cy^2$. $Q$ and $Q^{-1}$ are always improperly equivalent, and are properly equivalent if and only if $Q$ has order $2$ in $C(\Delta)$.\\

\section{Proofs of Theorems \ref{Test}-\ref{LocalTwoD} and Corollary \ref{OddClSizeCor}}
Before the proof of Theorem \ref{Test} we recall its statement:\\ %We begin with a proof of Theorem \ref{Test}. First, we recall the statement:\\

\textbf{Theorem \ref{Test}.} \textit{Let $S_{\Delta}$ be the set of all forms of discriminant $\Delta<0$ with class number $h(\Delta) \geq 2$. For $i \not = j$ define $Q_{ij}(x,y,z,w):= Q_i(x,y) -Q_j(z,w)$ for $Q_i, Q_j \in S_{\Delta}$. The only non-negative integer $m$ represented by all $Q \in S_{\Delta}$ is $m=0$ if and only if at least one of the $Q_{ij}$ is anisotropic.}\\
\\
Note: A quadratic form $Q$ is \textbf{anisotropic} if for all $\vec{x} \in \mathbb{Z}^n$ with $\vec{x} \not = \vec{0}$, $Q(\vec{x}) \not = 0$. If a form is not anisotropic, it is \textbf{isotropic}. %We know that if the difference between two forms of the same discriminant is anisotropic, then the intersection of the two forms is trivial. We can test whether or not the intersection of all forms of a particular discriminant is trivial by looking whether the difference between each of the forms is anisotropic.

\begin{proof}
%(of Theorem \ref{Test})
Let $n = h(\Delta)$. First suppose for all $i,j$ with $1 \leq i,j \leq n$, $Q_{ij}$ is isotropic. If $n = 2$, there is only one form $Q_{1,2}$, so if that form is isotropic then the intersection of the forms is non-trivial. So now suppose $n > 2$.  Without loss of generality, let $Q_1$ be the principal form. Then $Q_1$ represents $(n-1)$ nonzero (not necessarily distinct) values $a_2, \ldots , a_n$, where $Q_1$ and $Q_k$ both represent $a_k$ for all $2 \leq k \leq n$. By composing $Q_1$ with itself $(n-2)$ times, we have that $Q_1$ represents
\[\displaystyle\prod_{\substack{m=2 \\ m\not = k}}^{n} a_m.\]
%by the composition law.
Composing once more with $Q_k$ we have that $Q_k$ represents
\[\displaystyle\prod_{m=2}^{n} a_m.\]
Thus, all forms of discriminant $\Delta$ represent the nonzero integer $\prod_{m=2}^{n} a_m$. %and we have proven the result by contrapositive.\\
Conversely, suppose there exists an $i,j$ such that $Q_{ij}$ is anisotropic. Then $\textrm{int}(\{Q_i,Q_j\}) = \emptyset$, and so $\textrm{int}(S_{\Delta}) = \emptyset$.
\end{proof}
Before proceeding with the proof of Theorem \ref{FDT}, we introduce the following lemma:
\begin{lemma}
\label{OnetoAll}
%Let $p$ be an odd prime, and 
Let $Q(x,y) = qx^2+bxy+cy^2$ be a form of discriminant $\Delta=-2^k p_1 \dots p_{\ell} $, where $p_i$ for $1 \leq i \leq \ell$ and $q$ are odd primes with $q \nmid \Delta$. %prime, $q$ does not divide $\Delta$, and $gcd(2, p_i)=1$.
If $\left( \frac{q}{p_i} \right)=1$ for $1 \leq i \leq \ell$ and $n$ is represented by $Q$, then either $\left( \frac{n}{p_i} \right)=1$ or $n \equiv 0 \pmod{p_i}$. Similarly, if $\left( \frac{q}{p_i} \right)=-1$ and $n$ is represented by $Q$, then either $\left( \frac{n}{p_i} \right)=-1$ or $n \equiv 0 \pmod{p_i}$. Therefore, if $q$ is a quadratic residue modulo $\Delta$ then $n$ will be a quadratic residue as well, and if $q$ is a quadratic non-residue modulo $\Delta$ then $n$ will also be a quadratic non-residue.
\end{lemma}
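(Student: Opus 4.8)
The plan is to complete the square in the representation and reduce modulo each odd prime dividing $\Delta$. Suppose $n = Q(x,y) = qx^2 + bxy + cy^2$. Multiplying through by $4q$ produces the standard identity
\[ 4qn = (2qx+by)^2 - (b^2-4qc)y^2 = (2qx+by)^2 - \Delta y^2. \]
Because each $p_i$ divides $\Delta$, reducing this identity modulo $p_i$ annihilates the $\Delta y^2$ term and leaves $4qn \equiv (2qx+by)^2 \pmod{p_i}$. Setting $u = 2qx+by$, the congruence $4qn \equiv u^2 \pmod{p_i}$ is the crux of the whole argument.

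From here I would split into two cases according to whether $p_i \mid u$. If $p_i \mid u$, then $p_i \mid 4qn$; since $p_i$ is odd and $q \nmid \Delta$ forces $p_i \nmid q$, this gives $p_i \mid n$, i.e. $n \equiv 0 \pmod{p_i}$, which is precisely the exceptional case in the statement. If instead $p_i \nmid u$, then $4qn$ is a nonzero square modulo $p_i$, so $\left(\frac{4qn}{p_i}\right) = 1$. Using $\left(\frac{4}{p_i}\right)=1$ and multiplicativity of the Legendre symbol, this becomes $\left(\frac{q}{p_i}\right)\left(\frac{n}{p_i}\right) = 1$, and since both factors are $\pm 1$ they must be equal. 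This yields both stated implications at once: if $\left(\frac{q}{p_i}\right)=1$ then $\left(\frac{n}{p_i}\right)=1$, and if $\left(\frac{q}{p_i}\right)=-1$ then $\left(\frac{n}{p_i}\right)=-1$, in each case barring the exception $n\equiv 0 \pmod{p_i}$.

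For the closing assertion I would pass from the individual primes to the discriminant via multiplicativity of the Jacobi symbol. Writing $P = p_1\cdots p_\ell$ for the odd part of $\Delta$ and assuming $\gcd(n,P)=1$, the previous step shows $\left(\frac{q}{p_i}\right) = \left(\frac{n}{p_i}\right)$ for every $i$, hence $\left(\frac{q}{P}\right) = \prod_i \left(\frac{q}{p_i}\right) = \prod_i \left(\frac{n}{p_i}\right) = \left(\frac{n}{P}\right)$. Thus $q$ and $n$ carry the same quadratic-residue character modulo $\Delta$, as claimed.

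I do not anticipate a genuine obstacle here: the entire content is the completing-the-square identity together with the observation that $\Delta$ vanishes modulo each $p_i$. The only points that require care are tracking the exceptional case $n\equiv 0 \pmod{p_i}$ (which occurs exactly when $p_i \mid u$) and being precise about the meaning of ``quadratic residue modulo $\Delta$'' in the final sentence, which I would interpret through the Jacobi symbol on the odd squarefree part $p_1\cdots p_\ell$ so that the per-prime conclusions assemble without having to control the character at the prime $2$.
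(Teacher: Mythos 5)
Your proof is correct and follows essentially the same route as the paper's: complete the square in $Q$, reduce modulo each odd prime $p_i$ dividing $\Delta$ to kill the $\Delta y^2$ term, and compare Legendre symbols. The only difference is cosmetic --- you multiply through by $4q$ to keep the identity integral, whereas the paper works with the rational expression $q\left(x+\tfrac{b}{2q}y\right)^2+\tfrac{-\Delta}{4q}y^2$ directly; your version is in fact slightly cleaner about the exceptional case $p_i\mid n$ and about assembling the final Jacobi-symbol statement.
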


\begin{proof}
(of Lemma \ref{OnetoAll})
Consider the binary quadratic form $Q(x,y) = qx^2+bxy+cy^2$ of discriminant $\Delta=-2^k p_1 \dots p_\ell$ where $p_i$ for $1 \leq i \leq \ell$ and $q$ are prime, with $q \nmid \Delta$. We have
\begin{eqnarray*}
b^2-4qc & = & \Delta \\
c & = & \frac{-\Delta +b^2}{4q}.
\end{eqnarray*}
Substituting this for $c$ in $Q_1$ and completing the square, we have
%\[qx^2+bxy+\frac{b^2- \Delta}{4q}y^2\]
\[q\left(x+\frac{b}{2q}y\right)^2+\frac{-\Delta }{4q}y^2.\]
Now suppose $n=q(x+\frac{b}{2q}y)^2+\frac{-\Delta }{4q}$ so that $n$ is represented by $Q$. Since $q$ and $p_i$ are relatively prime, we have
$n \equiv q(x+\frac{b}{2q}y)^2 \pmod{p_i}$ for all $1 \leq i \leq \ell$.
The Legendre symbol is
\[\left(\frac{n}{p_i}\right)=\left(\frac{q(x+\frac{b}{2q}y)^2}{p_i}\right).\]
%=\left(\frac{q}{p_i}\right)
Thus, $\legendre{n}{p_i} = \legendre{q}{p_i}$ or $n|\Delta$.
\end{proof}

Now we recall the statement of Theorem \ref{FDT} and provide its proof:\\

\textbf{Theorem \ref{FDT}.} \textit{Let $\Delta<0 \equiv 0, 1 \pmod{4}$ be a fundamental discriminant, and let $S_\Delta$ be the set of all reduced forms of discriminant $\Delta$. If the class number $h(\Delta)$ is even, then $m=0$ is the only integer represented by all forms in $S_\Delta$.}

\begin{proof}
For the following proof, we will define a quadratic residue as follows: $a$ is a quadratic residue modulo $m$ if $a \not \equiv 0 \pmod{m}$ and if there exists a solution to $x^2 \equiv a \pmod{m}$. Similarly, $a$ is a quadratic non-residue modulo $m$ if $a \not \equiv 0 \pmod{m}$ and there does not exist a solution to $x^2 \equiv a \pmod{m}$. \\

Let $\Delta$ be a fundamental discriminant, and let $S_{\Delta}$ be the set of forms of discriminant $\Delta$, where $h(\Delta)$ is even. In order for $h(\Delta)$ to be even, one of the following cases must hold:
\begin{itemize}
\item[\textbf{Case 1:}] $\Delta=-p_1 \dots p_n \equiv 1 \pmod{4}$ for $n \geq 2$,
\item[\textbf{Case 2:}] $\Delta=-8s$ where $s=p_1 \dots p_n$, such that $p_i$ is odd for all $i$, and $n \geq 1$,
\item[\textbf{Case 3:}] $\Delta=-4s$ where $s=p_1 \dots p_n$, such that $s \equiv 1 \pmod{4}$ and $n \geq 1$, or $s \equiv 3 \pmod{4}$ and $n \geq 2$.
\end{itemize}

\textbf{Case 1:} The principal form here is $Q_1(x,y)=x^2+xy+\frac{1+p_1 \dots p_n}{4}y^2$. Completing the square and examining $Q_1 \pmod{p_i}$ for all $1 \leq i \leq n$, we find $$Q_1(x,y) \equiv \left(x+\frac{1}{2}y\right)^2 \pmod{p_i}.$$ Hence $Q_1$ only represents quadratic residues modulo $p_i$ or values congruent to $0$ modulo $p_i$ which in turn implies $Q_1$ represents quadratic residues or values congruent to $0 \pmod{p_1 \dots p_n}$. \\

We are also guaranteed a form that represents quadratic non-residues modulo $p_i$ for all $i$. Let $q$ be an odd prime such that $\left(\frac{q}{p_i}\right)=-1$ for all $1 \leq i \leq n$ and $\left( \frac{-p_1 \dots p_n}{q} \right)=1$. Choose $b,c \in \mathbb{Z}$ such that $b^2-4qc=-p_1 \dots p_n$. Then the form $Q_2(x,y)=qx^2+bxy+cy^2$ represents $q$, a quadratic non-residue modulo $p_i$. Then by Lemma \ref{OnetoAll}, $Q_2$ only represents quadratic non-residues and values congruent to $0\pmod{p_i}$ which again implies $Q_2$ represents quadratic non-residues and values congruent to $0 \pmod{p_1 \dots p_n}$. So if $m \in \textrm{int}(\{Q_1, Q_2 \})$ then $m \equiv 0 \pmod{p_1 \dots p_n}$. \\

We now claim that if $m$ is represented by $Q_1$ and $m\equiv 0 \pmod{p_1 \dots p_n}$ then $\frac{m}{p_1 \dots p_n}$ is represented by $Q_1$. Considering such an $m$ we have:
\begin{eqnarray*}
m&=&x^2+xy+\frac{1+p_1 \dots p_n}{4}y^2\\
%m&=& \left(x+\frac{1}{2}y\right)^2+\frac{p_1 \dots p_n}{4}y^2\\
0&\equiv & \left(x+\frac{1}{2}y \right)^2 \pmod{p_1 \dots p_n}\\
%0&\equiv & x+\frac{1}{2}y \pmod{p_1 \dots p_n}\\
-2x & \equiv & y \pmod{p_1 \dots p_n}.
\end{eqnarray*}
So either $x,y \not\equiv 0 \pmod{p_1 \dots p_n}$ or $x\equiv y \equiv 0 \pmod{p_1 \dots p_n}$. If $x,y \not \equiv 0 \pmod{p_1 \dots p_n}$, then $y=-2x+k(p_1 \dots p_n)$ for some $k \in \mathbb Z$ which gives
\begin{eqnarray*}
m&=&x^2+x(-2x+k(p_1 \dots p_n))+ \frac{1+p_1 \dots p_n}{4}(-2x+k(p_1 \dots p_n))^2\\
& \equiv &(p_1 \dots p_n)x^2 \pmod{(p_1 \dots p_n)^2}.
\end{eqnarray*} %$m=x^2+x(-2x+k(p_1 \dots p_n)+ \frac{1+p_1 \dots p_n}{4}(-2x+k(p_1 \dots p_n))^2$.Via algebraic manipulations, $m=\frac{k^2(p_1 \dots p_n)^2}{4}+(p_1 \dots p_n)x^2-kx(p_1 \dots p_n)^2+\frac{k^2(p_1 \dots p_n)^3}{4}$. This means $m \equiv (p_1 \dots p_n)x^2 \pmod{(p_1 \dots p_n)^2}$. 
Recall that $m \equiv 0 \pmod{p_1 \dots p_n}$. So either $m=(p_1 \dots p_n)\ell$ or $m=(p_1 \dots p_n)^r \ell$ for $(p_1 \dots p_n) \nmid \ell$ and $r>1 \in \mathbb{Z}$. We first handle the case that $r>1$. Then 
\begin{eqnarray*}
(p_1 \dots p_n)^r \ell &\equiv& (p_1 \dots p_n)x^2 \pmod{(p_1 \dots p_n)^2}\\
x^2 &\equiv& 0 \pmod{p_1 \dots p_n}.
\end{eqnarray*}
%$(p_1 \dots p_n)^r l \equiv (p_1 \dots p_n)x^2 \pmod{(p_1 \dots p_n)^2}$. This implies that $(p_1 \dots p_n)^{r-1} l \equiv x^2 \pmod{p_1 \dots p_n}$ which implies that $x^2 \equiv 0 \pmod{p_1 \dots p_n}$. 
This is a contradiction since we supposed $x,y \not \equiv 0 \pmod{p_1 \dots p_n}$. So we must instead have $x \equiv y \equiv 0 \pmod{p_1 \dots p_n}$. This means $x=i(p_1 \dots p_n)$ and $y=j(p_1 \dots p_n)$ for some $i,j \in \mathbb{Z}$. Hence \begin{eqnarray*}
\left(p_1\ldots p_n i+\frac{1}{2}p_1\ldots p_n j\right)^2+\frac{p_1\ldots p_n}{4}(p_1\ldots p_n j)^2 &=& m\\
%(p_1\ldots p_n)^2(i+\frac{1}{2}j)^2+\frac{p_1\ldots p_n }{4}(p_1\ldots p_n)^2j^2 & = & m\\
\left(i+\frac{1}{2}j\right)^2+\frac{p_1\ldots p_n}{4}j^2 & = & \frac{m}{(p_1\ldots p_n)^2}.
\end{eqnarray*}
Therefore, if $m=(p_1 \dots p_n)^r \ell$ is represented by $Q_1$ then $\frac{m}{(p_1 \dots p_n)^2}$ is represented by $Q_1$. But since $Q_1$ represents $(p_1 \dots p_n)$ then by the composition law, $Q_1$ represents $m(p_1 \dots p_n)$. Thus, if $m$ is represented by $Q_1$ and $m=(p_1 \dots p_n)^r \ell $ then $\frac{m}{(p_1 \dots p_n)}$ is represented by $Q_1$. 

Before we address the case where $m=(p_1 \dots p_n)\ell$ with $p_1 \dots p_n \nmid \ell$, we show that if $m$ is represented by $Q_2$ and $m=(p_1 \dots p_n)^r \ell$ for $r>1$, then $\frac{m}{(p_1 \dots p_n)}$ is also represented by $Q_2$. By a similar argument as above, we know that if $Q_2(x,y)=m$ for some $x,y \in \mathbb{Z}$ then $x,y \equiv 0 \pmod{p_1 \dots p_n}$. Again, let $x=i(p_1 \dots p_n)$ and $y=j(p_1 \dots p_n)$ for some $i,j \in \mathbb{Z}$ and substitute to get: 
\begin{eqnarray*}
q\left(p_1\ldots p_n i+\frac{1}{2}p_1\ldots p_n j\right)^2+\frac{p_1\ldots p_n}{4}(p_1\ldots p_n j)^2 &=& m\\
%q(p_1\ldots p_n)^2(i+\frac{1}{2}j)^2+\frac{p_1\ldots p_n }{4}(p_1\ldots p_n)^2j^2 & = & m\\
q\left(i+\frac{1}{2}j\right)^2+\frac{p_1\ldots p_n}{4}j^2 & = & \frac{m}{(p_1\ldots p_n)^2},
\end{eqnarray*}

so $\frac{m}{(p_1 \dots p_n)^2}$ is represented by $Q_2$. By the composition law, since $p_1 \dots p_n$ is represented by $Q_1$ (and since $Q_1$ is the principal form), $p_1 \dots p_n m$ is represented by $Q_2$. Thus, if $m$ is represented by $Q_2$ and $m=(p_1 \dots p_n)^r \ell$ for $r>1$, then $\frac{m}{p_1 \dots p_n} $ is represented by $Q_2$. 

Now let $m$ be represented by both $Q_1$ and $Q_2$, and suppose that $m=(p_1 \dots p_n)^r \ell$, where $r>1$ and $\ell \not \equiv 0 \pmod{p_1 \dots p_n}$ or $\ell=0$. We can divide out all of the factors of $(p_1 \dots p_n)$ and $\ell$ will still be represented by both $Q_1$ and $Q_2$. This implies that for all $p_i$, $\ell$ is both a quadratic residue and a quadratic non-residue modulo $p_i$ or $\ell \equiv 0 \pmod{p_i}$. So $\ell \equiv 0 \pmod{p_1 \dots p_n}$, which implies $\ell=0$. Hence, $m=(p_1 \dots p_n)^r \ell$ is only represented by $Q_1$ and $Q_2$ when $\ell=0$. 

As for the remaining case that $m=p_1 \dots p_n \ell$ where $\ell \not \equiv 0 \pmod{p_1 \dots p_n}$ or $\ell=0$:  if $m$ is represented by both $Q_1$ and $Q_2$ then again by composition $(p_1 \dots p_n)^2m$ is represented by both forms. This will then go back to an earlier argument, and we conclude that $m=0$. %can only be the case when $l=0$ by the above argument. Hence, if $m$ is represented by both $Q_1$ and $Q_2$, then $m=0$. 

\textbf{Case 2:} Let $\Delta=-8s$ where $s=p_1 \dots p_n$, such that $p_i \neq 2$ for all $i$ and $n \geq 1$. The principal form is $Q_3(x,y)=x^2+2sy^2$. We see immediately that $Q_3(x,y) \equiv x^2 \pmod{p_i}$ for all $1 \leq i \leq n$. Hence $Q_3$ %represents quadratic residues and integers congruent to $0$ modulo $p_i$ for all $i$ by Lemma \ref{OnetoAll}. Furthermore, this implies that $Q_3$ 
represents quadratic residues and values congruent to $0$ modulo $s$. We also have the form $Q_4(x,y)=qx^2+bxy+cy^2$, where $\left(\frac{q}{p_i}\right)=-1$ for all $i$ and $b \not \equiv 0 \pmod{s}$. Since $Q_4$ represents $q$, a quadratic non-residue modulo $p_i$ for all $i$ then $Q_4$ represents quadratic non-residues or values congruent to $0$ modulo $s$. Therefore the only integers represented by both $Q_3$ and $Q_4$ are congruent to $0 \pmod{s}$. We will for this argument consider a third form $Q_3'(x,y) = 2x^2+sy^2$ which is distinct from the principal form but which still has determinant $\Delta$. % modulo $s$. 

We claim now that if $m \equiv 0 \pmod{s}$ is represented by $Q_3$  and $Q_3'$ then $\frac{m}{s}$ is represented by $Q_3$ and $Q_3'$. Considering $Q_3$ we have 
\begin{eqnarray*}
m & = & x^2+2sy^2\\
%0 & \equiv & x^2 \pmod{s} \\
0 & \equiv & x \pmod{s}.
\end{eqnarray*}
This implies $x=si$ for some $i \in \mathbb{Z}$. So
\begin{eqnarray*}
m & = & (s)^2i^2+2sy^2\\
\frac{m}{s} & =& si^2+2y^2.
\end{eqnarray*}
Hence $\frac{m}{s}$ is represented by the form $Q_3'$. %Note that this form is distinct from the principal form but still has the same discriminant. 
Now let $n \equiv 0 \pmod{s}$ be represented by $Q_3'$. Then 
\begin{eqnarray*}
n & = & 2x^2+sy^2\\
%0 & \equiv & 2x^2 \pmod{s} \\
0 & \equiv & x \pmod{s}.
\end{eqnarray*}
This implies $x=si$ for some $i \in \mathbb{Z}$. So
\begin{eqnarray*}
n & = & 2(s)^2i^2+sy^2\\
\frac{n}{s} & =& 2si^2+y^2.
\end{eqnarray*}
Hence $\frac{n}{s}$ is represented by the form $Q_3$. More crucially, $\frac{m}{s^2}$ is represented by $Q_3$ if $m$ is represented by $Q_3$ and if $m=s^{2k+1} \ell$ is represented by both $Q_3$ and $Q_3'$ then $\frac{m}{s^{2k+1}}$ is represented by both forms.

Now suppose $m$ is represented by $Q_4$ and $m \equiv 0 \pmod{s}$. Either $m=s^rl$ for $r>1$ and $l \not \equiv 0 \pmod{s}$, or $m=sl$ for $l \not \equiv 0 \pmod{s}$. Assume the former. Then 
\begin{eqnarray*}
m&=&q\left(x+\frac{b}{2q}y\right)^2+\frac{2s}{q}y^2\\ %Taking this modulo $s$ we see that $
0 &\equiv& \left(x+\frac{b}{2q}y\right)^2 \pmod{s}. 
\end{eqnarray*}
This implies that %$0 \equiv (x+\frac{b}{2q}y) \pmod{s}$, so we see that 
$y \equiv \frac{-2q}{b}x \pmod{s}$ and so either $x\equiv y \equiv 0 \pmod{s}$ or $x,y \not \equiv 0 \pmod{s}$. First suppose $x,y \not \equiv 0 \pmod{s}$. Thus $y=\frac{-2q}{b}x+sf$ for some integer $f$. Substituting into $Q_4$ we get
\begin{eqnarray*}
m & = & qx^2 +bx\left(\frac{-2q}{b}x+sf\right)+c\left(\frac{-2q}{b}x+sf\right)^2\\
%m & = & \frac{4q^2c-qb^2}{b^2}x^2+\frac{b^2sf-4qsfc}{b}x+c(sf)^2\\
0 & \equiv & \frac{-8qs}{b^2}x^2 \pmod{s^2}\\
0 & \equiv & x^2 \pmod{s^2}\\
%0 & \equiv & x \pmod{s^2}\\
0 & \equiv & x \pmod{s}.
\end{eqnarray*}
This is a contradiction since we supposed $x,y \not \equiv 0 \pmod{s}$. Hence $x\equiv y \equiv 0 \pmod{s}$. This implies $x=si$ and $y=sj$ for some $i,j \in \mathbb{Z}$ and 
\begin{eqnarray*}
q\left(si + \frac{b}{2q}sj\right)^2 + \frac{2s}{q}(sj)^2 & = & m\\
q\left(i+\frac{b}{2q}j\right)^2+\frac{2s}{q}j^2 & = & \frac{m}{s^2}
\end{eqnarray*}
Therefore, if $m$ is represented by $Q_4$ and $m=s^r \ell$ then $\frac{m}{s^2}$ is represented by $Q_4$. Since $s$ is represented by the principal form, by composition, if $Q_4$ represents $\frac{m}{s^2}$ then $Q_4$ represents $\frac{m}{s}$. 

If $Q_3'$ represents quadratic non-residues and values congruent to $0\pmod{s}$: Let $m=s^k\ell$ such that $\ell \not \equiv 0 \pmod{s}$ or $\ell=0$ be represented by both $Q_3$ and $Q_3'$. Then we can divide off all of the factors of $s$, and $\ell$ will be represented by both forms. This implies that $\ell$ is both a quadratic residue and quadratic non-residue modulo $s$, or $0$ modulo $s$. So $\ell=0$ and $m=s^r \ell$ is only represented by $Q_3$ and $Q_3'$ when $\ell=0$.

If $Q_3'$ represents quadratic residues and values congruent to $0$ modulo $s$: Let $m=s^k\ell$, $k>1$ such that $\ell \not \equiv 0 \pmod{s}$ or $\ell=0$ be represented by  $Q_3$, $Q_3'$ and $Q_4$. We can divide out all factors of $s$ and $\ell$ will still be represented by $Q_4$ and either $Q_3$ or $Q_3'$. This implies that $l$ is both a quadratic residue and quadratic non-residue modulo $s$, or $0$ modulo $s$. So $\ell=0$ and $m=(s)^r \ell$ is only represented by $Q_3$, $Q_3'$ and $Q_4$ when $\ell=0$.

As for the remaining situation where $m=s\ell$ where $\ell \not \equiv 0 \pmod{s}$ or $\ell =0$, if such an $m$ were represented by $Q_3$, $Q_3'$ and $Q_4$ then by the composition law $s^2m$ is represented by all forms. This then implies $\ell = m =0$.%can only be the case when $l=0$ by the above argument. Hence if $m$ is represented by these forms, then $m=0$. 

\textbf{Case 3:} Let $\Delta =-4s$ where $s=p_1 \dots p_n$ such that $s \equiv 1 \pmod{4}$ and $n \geq 1$. Now the principal form is $Q_5(x,y)=x^2+sy^2$ which once again %. We see immediately that $Q_5(x,y) \equiv x^2 \pmod{p_i}$ for all $1 \leq i \leq n$. Hence $Q_5$ represents quadratic residues, and integers congruent to $0$ modulo $p_i$ for all $i$ by Lemma \ref{OnetoAll}. Furthermore, this implies that $Q_5$ 
represents quadratic residues, and values congruent to $0$ modulo $s$. There is also the form $Q_6(x,y)=qx^2+bxy+cy^2$ where $\left(\frac{q}{p_i}\right)=-1$ for all $i$ and $b \not \equiv 0 \pmod{s}$. Again, $Q_6$ only represents quadratic non-residues or values congruent to $0$ modulo $s$. So the only integers represented by both $Q_5$ and $Q_6$ are congruent to $0\pmod{s}$.%modulo $s$. 

We claim that if $m$ is represented by $Q_5$ and $m \equiv 0 \pmod{s}$, then $\frac{m}{s}$ is represented by $Q_5$ because %Assume $m$ is represented by $Q_5$ and $m \equiv 0 \pmod{s}$. Then 
\begin{eqnarray*}
m & = & x^2 + sy^2\\
%0 & \equiv & x^2 \pmod{s}\\
0 & \equiv & x \pmod{s}.
\end{eqnarray*}
Hence $x=is$ for some $i \in \mathbb{Z}$ and 
\begin{eqnarray*}
m & = & (si)^2 + sy^2\\
%m & = & s^2 i^2 +sy^2\\
\frac{m}{s} & = & si^2+y^2.
\end{eqnarray*}
%Thus if $m$ is represented by $Q_5$ and $m \equiv 0 \pmod{s}$ then $\frac{m}{s}$ is represented by $Q_5$. 

Similarly, we claim that if $m$ is represented by $Q_6$ and $m \equiv 0 \pmod{s}$, then $\frac{m}{s}$ is represented by $Q_6$. Either $m=s^k \ell$ for $k>1$, $\ell \not \equiv 0 \pmod{s}$, or $m=s\ell$ where $\ell \not \equiv 0 \pmod{s}$. Assume the former. Then \begin{eqnarray*}
m & = & qx^2+bxy+cy^2\\
%m & = & q(x+\frac{b}{2q}y)^2+\frac{s}{q}y^2\\
0 & \equiv & q\left(x+\frac{b}{2q}y\right)^2 \pmod{s}\\
\frac{-2q}{b}x & \equiv & y \pmod{s}.
\end{eqnarray*}

Assume $x,y \not \equiv 0 \pmod{s}$. Then $y=\frac{-2q}{b}x+sf$ for some $f \in \mathbb{Z}$ and 
\begin{eqnarray*}
m & = & qx^2+bx\left(\frac{-2q}{b}x+sf\right)+c\left(\frac{-2q}{b}x+sf\right)^2\\
%0 & \equiv & -qx^2+\frac{4q^2c}{b^2}x^2+bsfx-\frac{4qcsf}{b}x \pmod{s^2}\\
%0 & \equiv & \frac{4qs}{b^2}x^2-\frac{4s^2f}{b}x \pmod{s^2}\\
0 & \equiv & \frac{4qs}{b^2}x^2 \pmod{s^2}\\
%x & \equiv & 0 \pmod{s^2}\\
x & \equiv & 0 \pmod{s}.
\end{eqnarray*}

This is a contradiction, so it must be that $x,y \equiv 0 \pmod{s}$. Substituting $x=is$ and $y=js$ for some $i,j \in \mathbb{Z}$ gives
\begin{eqnarray*}
m & = & q\left(si+\frac{bs}{2q}j\right)^2+\frac{s}{q}(sj)^2\\
%m & = & qs^2(i+\frac{b}{2q}j)^2+\frac{s}{q}s^2j^2\\
\frac{m}{s^2} & = & q\left(i+\frac{b}{2q}j\right)^2+\frac{s}{q}j^2.
\end{eqnarray*}

Therefore if $m$ is represented by $Q_6$ and $m=s^k \ell$ for $k>1$ and $\ell \not \equiv 0 \pmod{s}$, then $\frac{m}{s^2}$ is also represented by $Q_6$. Since $s$ is represented by the principal form, by composition, if $Q_6$ represents $\frac{m}{s^2}$, then $Q_6$ represents $\frac{m}{s}$. 

Let $m$ be represented by both $Q_5$ and $Q_6$, and suppose $m=s^k \ell$ for $k>1$, where $\ell \not \equiv 0 \pmod{s}$ or $\ell=0$. We can divide out all factors of $s$, and $\ell$ will still be represented by both forms. This implies that $\ell$ is both a quadratic residue and a quadratic non-residue modulo $s$, or $0$ modulo $s$. This means that $\ell \equiv 0 \pmod{s}$, and hence $\ell=0$. Therefore, $m=s^k \ell$ is only represented by $Q_5$ and $Q_6$ when $\ell=0$. 

We now address the case when $m=s\ell$ where $\ell \not \equiv 0 \pmod{s}$ or $\ell=0$. Suppose $m$ is represented by both $Q_5$ and $Q_6$. Then by composition, $s^2m$ is represented by both forms. This can only be the case when $\ell=0$ by the above argument. Hence if $m$ is represented by both forms, then $m=0$. 

\end{proof} 
%\OnetoAll*

%Let $p$ an odd prime, and let $Q = qx^2+bxy+cy^2$ be a form of discriminant $\Delta$ where $q \equiv 3 \pmod{4}$ is prime.
%\begin{itemize}
%\item[(a)] Suppose $\Delta = -4p$. If $\left( \frac{q}{p} \right)=1$ and $n$ is represented by $Q$, then either $\left( \dfrac{n}{p} \right)=1$ or $n \equiv 0 \pmod{p}$. Similarly, if $\left( \frac{q}{p} \right)=-1$ and $n$ is represented by $Q$, then either $\left( \dfrac{n}{p} \right)=-1$ or $n \equiv 0 \pmod{p}$.
%\item[(b)] Suppose $\Delta = -8p$. If $\left( \frac{q}{p} \right)=1$ and $n$ is represented by $Q$, then either $\left( \dfrac{n}{p} \right)=1$ or $n \equiv 0 \pmod{p}$. Similarly, if $\left( \frac{q}{p} \right)=-1$ and $n$ is represented by $Q$, then either $\left( \dfrac{n}{p} \right)=-1$ or $n \equiv 0 \pmod{p}$.
%\end{itemize}

%\POnetoAll*
Moving on to cases where the intersection is nontrivial, we first have:\\

\textbf{Theorem \ref{POnetoAll}.} \textit{Let $p$ be a prime and let $S_{\Delta}$ be the set of forms of discriminant $\Delta$. If $n$ is represented by all forms $Q \in S_{\Delta}$ and $p$ is represented by some form $Q_i \in S_{\Delta}$, then $np$ is represented by all forms $Q \in S_{\Delta}$.}

\begin{proof} %(of Theorem \ref{POnetoAll})
Let $Q_1 \in S_{\Delta}$ and suppose that the prime $p$ is represented by $Q_1$. For any $Q_2 \in S_{\Delta}$ (not necessarily distinct from $Q_1$) there exists a $Q_3 \in S_{\Delta}$ %Choose $Q_2$ such that $Q_2 \in S_{\Delta}$. Then there exists a form $Q_3 \in S_{\Delta}$ 
such that $Q_1 \circ Q_3 =Q_2$. Since $n$ is represented by $Q_3$ and $p$ is represented by $Q_1$, $np$ is represented by $Q_2$. Thus, $np$ is represented by all forms with discriminant $\Delta$. Note that there are an infinite number of primes represented by some form in $S_{\Delta}$ by Lemma \ref{Cox Lemma 2.5} and Dirichlet's theorem on primes in arithmetic progressions.
\end{proof}

%Proof for prime discriminant

%\begin{proof}
%Let $n \in \mathbb{Z}$ be represented by all forms of discriminant $\Delta = -q$ where $q$ is prime. Then $q \equiv 3 \mod{4}$. Suppose $\left(\frac{p}{q}\right) = 1$ and consider $\left(\frac{\Delta}{p}\right) = \left(\frac{-q}{p}\right)$. Then we have \\
%\[\left(\frac{-q}{p}\right) = \left(\frac{-1}{p}\right)\left(\frac{q}{p}\right).\]
%
%First, suppose $p \equiv 1 \mod{4}$. Then we have
%\[\left(\frac{-q}{p}\right) = \left(\frac{-1}{p}\right)\left(\frac{q}{p}\right)=(1)\left(\frac{p}{q}\right)=(1)(1) = 1.\]
%We can similarly show that when $p \equiv 3 \pmod{4}$, $\left(\frac{-q}{p}\right)=1$. \\
%In either case, $\left(\frac{\Delta}{p}\right) = 1$, so $p$ is represented by some quadratic form with discriminant $\Delta$ by Cox \cite{Cox}.
%
%Let $Q$ be a quadratic form with discriminant $\Delta$ such that $p$ is represented by $Q$. Choose $Q'$ such that $Q'$ is also a quadratic form with discriminant $\Delta$. Then there exists a third quadratic form $W$, not necessarily distinct from $Q$ and $Q'$, with discriminant $\Delta$ such that $QW=Q'$. Since $n$ is represented by $W$ and $p$ is represented by $Q$, $np$ is represented by $Q'$. Thus, $np$ is represented by all quadratic forms with discriminant $\Delta$.
%\end{proof}

%%%%
%ODD SIZE CLASS GROUP
%%%%
Before giving the proof of Theorem \ref{OddClSize2}, we have the following lemma:
\begin{restatable}{lemma}{UniqueSq}
\label{UniqueSq}
If $\Delta$ is such that $h(\Delta)$ is odd, then for all forms $Q_j$ of discriminant $\Delta$ there exists some form $Q_i$ of discriminant $\Delta$ such that ${Q_i}^2=Q_j$.
\end{restatable}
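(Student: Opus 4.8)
The plan is to recast the statement as a purely group-theoretic fact about the class group $C(\Delta)$. Recall from the background that $C(\Delta)$ is a finite abelian group of order $h(\Delta)$ under composition of forms, with identity the class of the principal form. Saying that every form $Q_j$ is the square (under composition) of some form $Q_i$ is exactly the assertion that the squaring map $\sigma\colon C(\Delta) \to C(\Delta)$ defined by $\sigma([Q]) = [Q]\circ[Q] = [Q]^2$ is surjective. So it suffices to prove that $\sigma$ is surjective whenever $h(\Delta)$ is odd.

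First I would observe that $\sigma$ is a group homomorphism: since $C(\Delta)$ is abelian, $([Q_1]\circ[Q_2])^2 = [Q_1]^2 \circ [Q_2]^2$. Next I would identify its kernel. An element $[Q]$ lies in $\ker \sigma$ precisely when $[Q]^2$ equals the identity class, that is, when $[Q]$ has order $1$ or $2$ in $C(\Delta)$. By Lagrange's theorem the order of any element of $C(\Delta)$ divides $h(\Delta)$; since $h(\Delta)$ is odd, no element can have order $2$. Hence $\ker \sigma$ is trivial and $\sigma$ is injective. Because $C(\Delta)$ is a finite set, an injective self-map is automatically surjective, so $\sigma$ is surjective: for every form $Q_j$ there is a class $[Q_i]$ with $[Q_i]^2 = [Q_j]$.

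To finish I would translate this back to the level of forms. Choosing the reduced representative $Q_i$ of the class produced by surjectivity yields an honest form of discriminant $\Delta$ with $Q_i^2 = Q_j$ in $C(\Delta)$, as required; here I invoke the fact recorded in the background that each proper equivalence class contains a unique reduced form, so that the abstract class $[Q_i]$ does correspond to a genuine reduced form of discriminant $\Delta$.

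There is no serious analytic or combinatorial obstacle; the content is entirely elementary group theory, and the one point requiring care is the bookkeeping between forms and their classes. I would therefore be deliberate about stating the conclusion at the level of classes in $C(\Delta)$ (using the paper's convention of writing $Q$ for $[Q]$) before passing to reduced representatives, so that the identification of forms with group elements is unambiguous throughout.
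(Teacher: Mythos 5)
Your proof is correct and follows essentially the same route as the paper's: both observe that odd class number forbids elements of order $2$, conclude that the squaring map on $C(\Delta)$ is injective, and then use finiteness to upgrade injectivity to surjectivity. Your version is slightly more formal in phrasing the injectivity via the kernel of the squaring homomorphism, but the substance is identical.
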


\begin{proof}
Since the class group has odd order, no element can be its own inverse. Also, it follows from elementary group theory that the square of each class is distinct. In particular, if $i \neq k$ then ${Q_i}^2 \neq {Q_k}^2$. Since there are $h(\Delta)$ proper equivalence classes, and $h(\Delta)$ possible values for ${Q_i}^2$, for all $Q_j \in C(\Delta)$ there must exist some form $Q_i$ such that ${Q_i}^2=Q_j$.
\end{proof}

%\OddClSize*
%Let ${\Delta}_1, {\Delta}_2$ be discriminants with odd class size. Then all forms of discriminants ${\Delta}_1$ and ${\Delta}_2$ will have a non-trivial intersection.

\textbf{Theorem \ref{OddClSize2}.} \textit{Let $S_{\Delta}$ be the collection of all forms of discriminant $\Delta$, and suppose $h(\Delta)$ is odd. Then there are infinitely-many positive integers represented by all forms in $S_\Delta$. In particular, the product of the $x^2$ and $y^2$ coefficients of all the non-principal, non-equivalent forms of discriminant $\Delta$ will be represented.}

\begin{proof}
Let the elements of $S_\Delta$ be: 
\begin{eqnarray*}
Q_1(x,y) &=& x^2 + b_0xy + c_0y^2\\
Q_2(x,y) &=& a_1x^2 + b_1xy + c_1y^2\\
Q_3(x,y) &=& a_1x^2 - b_1xy + c_1y^2\\
\vdots\\
Q_{2n}(x,y) &=& a_nx^2 + b_nxy + c_ny^2\\
Q_{2n+1}(x,y) &=& a_nx^2 - b_nxy + c_ny^2.
\end{eqnarray*}
Note that $Q_1$ is the principal form and $Q_{2i+1} = Q_{2i}^{-1}$ for $1 \leq i \leq n$. We know that we have this group structure because $h(\Delta)$ is odd, and so in particular there are no elements of order 2 in $C(\Delta)$.\\

Fix a form $Q_j$, where $j \not = 1$.  By Lemma \ref{UniqueSq}, there is a unique $k$ such that $Q_k \circ Q_k = Q_j$. Consider the composition
\[Q_2 \circ Q_3 \circ \dots \circ Q_{k-1} \circ Q_k \circ Q_k \circ Q_{k+2} \circ \dots \circ Q_{2n} \circ Q_{2n+1}\]
if $k$ is even, and the composition
\[Q_2 \circ Q_3 \circ \dots \circ Q_{k-2} \circ Q_k \circ Q_k \circ Q_{k+1} \circ \dots \circ Q_{2n} \circ Q_{2n+1}\]
if $k$ is odd. In either case, the composition reduces to 
\[Q_1 \circ \dots \circ Q_1 \circ Q_k \circ Q_k \circ Q_1 \circ \dots \circ Q_1 = Q_k \circ Q_k = Q_j.\]
As each form represents its own $x^2$ and $y^2$ coefficients, and each form has the same $x^2$ and $y^2$ coefficients as its inverse, by the composition law $\prod_{i=1}^{2n+1} a_ic_i$ is represented by $Q_j$ for $1 < j \leq 2n+1$.
Similarly, the composition
\[Q_2 \circ Q_3 \circ \ldots \circ Q_{2n+1} = Q_1 \]
shows that $\prod_{i=2}^{2n+1} a_ic_i$ is represented by $Q_1$. Also, if an integer $n$ is represented by a form $Q$ then $m^2n$ is represented by $Q$ for all $m \in \mathbb{Z}$. Thus the infinite set $\left\{m^2\left(\prod_{i=2}^{2n+1} a_ic_i\right): m \in \mathbb{Z} \right\} \subseteq \textrm{int}(S_{\Delta})$. %is contained in the intersection of $S_\Delta$.  Thus the intersection of $S_\Delta$ is infinite.
\end{proof}

\textbf{Corollary \ref{OddClSizeCor}.} \textit{Let $Q_1$ and $Q_2$ be forms of discriminants $\Delta_1$ and $\Delta_2$ (respectively), and suppose that $h(\Delta_1)$ and $h(\Delta_2)$ are both odd. Then there are infinitely many $m$ represented by both $Q_1$ and $Q_2$.}

\begin{proof}
Let $S_{\Delta_1}$ be the set of forms of discriminant $\Delta_1$, $S_{\Delta_2}$ be the set of forms of discriminant $\Delta_2$, and $S_{\Delta_1\Delta_2}$ be the set of forms of discriminant $\Delta_1$ or $\Delta_2$. By Theorem \ref{OddClSize2} there exist integers $\alpha, \beta \not = 0$ such that $\alpha \in \textrm{int}(S_{\Delta_1})$ and $\beta \in \textrm{int}(S_{\Delta_2})$. Using composition with the identity element, $\alpha^2 \in \textrm{int}(S_{\Delta_1})$ and $\beta^2 \in \textrm{int}(S_{\Delta_2})$. 
We also know that if a form $Q$ represents an integer $n$, then it also represents $m^2n$ for all $m \in \mathbb{Z}$. Thus $\alpha^2\beta^2 \in \textrm{int}(S_{\Delta_1\Delta_2})$. Using this fact once again, we see that $\textrm{int}(S_{\Delta_1\Delta_2})$ is infinite.
\end{proof}

We now present two theorems concerning the local representation of integers. As mentioned earlier, if an integer $m$ is locally represented by a quadratic form $Q(x,y)=ax^2+bxy+cy^2$, then by the Hasse-Minkowski Theorem, $Q$ represents $m$ over $\mathbb Q$. By clearing denominators, we can easily show that $mn$ is represented by $Q$, where $n$ is a perfect square. %In addition, since the principal form of the class group containing $Q$ will always represent $m^2$, if $n$ is represented by all forms in the class group containing $Q$, composition with the principal form can be used to show that all forms will represent $m^{2k}n$ for $k \in \mathbb{Z}$. Thus, if the intersection of all forms of a fixed discriminant is non-trivial, some multiple of any number will be represented by all forms of that discriminant. 
The motivation behind the following theorems is to show that $Q$ represents a multiple $mn$, where $n$ is not a perfect square. %or congruent to $0$ modulo $m$.

\textbf{Theorem \ref{LocalOneD}.} \textit{Let $Q$ be a quadratic form, and suppose $m$ is locally represented by $Q$. Then some nonzero, nonsquare multiple of $m$ is represented by $Q$.}
\begin{proof}
By \cite[Theorem 1.3]{Cassels}, since $m$ is locally represented by $Q$, it is integrally represented by some form in the same genus as $Q$. Let $Q_i$ be the form that represents $m$. There exists a $Q_j$ such that $Q_i\circ Q_j = Q$, so %that by the composition law 
$mn$ is represented by $Q$ for all $n$ represented by $Q_j$. Furthermore, since there is no binary quadratic form that represents only squares, we are guaranteed to find some multiple $mn$ where $n$ is not square.
\end{proof}

\textbf{Theorem \ref{LocalTwoD}:} \textit{Let $Q_1$ and $Q_2$ be forms with discriminants $\Delta_1$ and $\Delta_2$, respectively. Let $m$ be locally represented by $Q_1$ and $Q_2$, and let $n \not \equiv 0 \pmod{m}$ be represented by all forms of discriminant $\Delta_1$ and $\Delta_2$. Then $mk$ is represented by $Q_1$ and $Q_2$, where $k$ is a non-square integer.}
\begin{proof}
%Let $n\not = 0$ be in the intersection of all the forms of discriminant $\Delta_1$ and $\Delta_2$. 
We know by Lemma \ref{Cox Lemma 2.5} that an odd prime $p$ not dividing $\Delta$ is represented by some form of discriminant $\Delta$ if and only if $\left(\frac{\Delta}{p}\right)=1$. Thus, an odd prime $p$ will be represented by some form of discriminant $\Delta_1$ and some form of discriminant $\Delta_2$ if and only if $\left(\frac{\Delta_1}{p}\right) = \left(\frac{\Delta_2}{p}\right)=1$. %We know that there are infinitely many primes that satisfy this by Dirichlet's Theorem on primes in arithmetic progressions. 
Let $p$ be an odd prime that satisfies $\left(\frac{\Delta_1}{p}\right) = \left(\frac{\Delta_2}{p}\right)=1$ and $\gcd(m,p)=1$. Let $n\not = 0$ be in the intersection of all the forms of discriminant $\Delta_1$ and $\Delta_2$. Since $n$ is represented by all forms of discriminant $\Delta_1$ and $p$ is represented by some form of discriminant $\Delta_1$, $np$ will be represented by all forms of discriminant $\Delta_1$ by Theorem \ref{POnetoAll}. Similarly, $np$ will be represented by all forms of discriminant $\Delta_2$. 

Now, let $m$ be locally represented by $Q_1$ and $Q_2$. Then we know by \cite[Theorem 1.3]{Cassels} that some form in the same genus as $Q_1$ represents $m$, and some form in the same genus as $Q_2$ represents $m$. Let $Q_{r}$ be the form of discriminant $\Delta_1$ that represents $m$, and let $Q_{s}$ be the form of discriminant $\Delta_2$ that represents $m$.  There will be some form, $Q_{g}$, such that $Q_{r}\circ Q_{g}=Q_1$, and there will be some form, $Q_{h}$, such that $Q_{s}\circ Q_{h}=Q_2$. We have already shown that $np$ is represented by all forms of discriminant $\Delta_1$ and $\Delta_2$, so $np$ will be represented by $Q_{g}$ and $Q_{h}$. By composing these forms with the principal form of their respective discriminants, we can show that $n^2p$ is represented by $Q_{g}$ and $Q_{h}$. Let $k=n^2p$, which is not a perfect square and is not a multiple of $m$. Thus by the composition law, $mk$ will be represented by both $Q_1$ and $Q_2$.
\end{proof}

%%%%%%%%%%%%%%%%%%%%%%%%%%%%%%
% Class field theory
%%%%%%%%%%%%%%%%%%%%%%%%%%%%%%
\section{Class Field Theory}
\label{CFT}
	We now discuss specific material needed for the proof of Theorem \ref{number of ideals}. General references for this section include \cite[Chapters 5,6,7]{Cox} and \cite{Neukirch}. Let $K$ be a number field and $L$ a finite extension of $K$. Let $\mathcal{O}_K$ and $\mathcal{O}_L$  respectively denote the ring of integers of $K$ and $L$. If $\mathfrak{p}$ is a prime ideal in $\mathcal{O}_K$, then $\mathfrak{p}\mathcal{O}_L$ is an ideal of $\mathcal{O}_L$ with $\mathfrak{p}\mathcal{O}_L=\mathfrak{P}_1^{e_1}\cdots \mathfrak{P}_g^{e_g}$, for $\mathfrak{P}_i \subset L$ distinct prime ideals containing $\mathfrak{p}$. The integer $e_i$ is called the \textbf{ramification index} of $\mathfrak{p}$ in $\mathfrak{P}_i$. If $e_i=1$ for all $i$, we say that $\mathfrak{p}$ is \textbf{unramified} in $L$. Otherwise, $\mathfrak{p}$ is \textbf{ramified} in $L$. Each prime $\mathfrak{P}_i$ creates a finite residue field extension $\left[ (\mathcal O_L/ \mathfrak{P}_i) : (\mathcal O_K/ \mathfrak{p})\right]=f_i$. The degree of this extension is called the \textbf{inertial degree} of $\mathfrak{p}$ in $\mathfrak{P}$. If $f_i=1$ for all $i$, we say that $\mathfrak{p}$ \textbf{splits completely} in $L$.

	The extension of $K\subseteq L$ is \textbf{unramified} if all primes (including $p \vert \infty$) are unramified in $L$. The extension is \textbf{abelian} if $\text{Gal}(L/F)$ is an abelian group. The \textbf{Hilbert class field} $L$ of $K$ is the maximal, unramified, abelian extension of $K$. \\
	
	Henceforth, $K$ will refer to an imaginary quadratic number field, and $L$ to the Hilbert class field of $K$.\\
	
	Let $\mathfrak{p}$ be a prime of $\mathcal{O}_K$ and let $\mathfrak{P}$ be a prime of $\mathcal{O}_L$ containing $\mathfrak{p}$. Then the \textbf{Artin symbol} $((L/K)/ \mathfrak{P})$ is the unique element $\sigma\in \text{Gal}(L/K)$ such that for all $\alpha\in\mathcal{O}_L$, \[\sigma(\alpha)\equiv \alpha^{N(\mathfrak{p})}\pmod{\mathfrak{P}},\] where $N(\mathfrak{p})=|\mathcal{O}_K/\mathfrak{p}|$.
	
Last, given a form $Q$ of discriminant $\Delta=s^2n$ where $n<0$ is square-free, we can associate $Q$ to a field $K=\Q(\sqrt{n})$ of discriminant %. $K$ also has a discriminant, defined to be 
\[d_K=\begin{cases}
    
    n & \text{if } n\equiv 1\pmod{4},\\
    
    4n & \text{otherwise}.
    
    \end{cases} \]
    
	By Lemma \ref{Cox Lemma 2.5}, for $p\nmid\Delta$, $p$ is represented by some form of discriminant $\Delta$ if and only if $\legendre{\Delta}{p}=1$. Similarly, for $p\nmid d_k$, $p$ splits in $\mathcal{O}_K$ if and only if $\legendre{d_k}{p}=1$. Due to the relationship between $\Delta$ and $d_K$, $p\nmid \Delta$ is represented by some form of discriminant $\Delta$ if and only if $p$ splits in $\mathcal{O}_K$. Thus, there is a powerful relationship between splitting in fields and representation by forms. \\

Our final goal is to prove:\\

\textbf{Theorem \ref{number of ideals}.}
Let $\Delta=s^2n$, where $n<0$ is square-free, and let $Q$ be some form of discriminant $\Delta$ and order $m$. Let $K=\Q(\sqrt{n})$ and $L$ be the Hilbert class field of $K$. A prime $p$ not dividing the discriminant of $K$ is represented by some such $Q$ if and only if $p$ splits into $\frac{[L:\Q]}{m}$ factors in $L$.

Before we can provide the proof, we need a series of lemmas. 
\begin{restatable}{lemma}{Cox Lemma 5.9}
\label{Cox Lemma 5.9}
Let $\mathfrak{p}$ be prime in $K$. The primes $\mathfrak{P}_1,\ldots,\mathfrak{P}_g$ of $L$ containing $\mathfrak{p}$ all have  ramification index $e=1$ and inertial degree $f$. In addition, \[fg=[L:K].\]
\end{restatable}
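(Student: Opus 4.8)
The plan is to exploit the Galois structure of the extension $L/K$. Since $L$ is the Hilbert class field of $K$, it is by definition an abelian extension of $K$, and in particular $L/K$ is Galois; moreover, again by definition, it is unramified. These two facts are the entire engine of the proof, and the argument is essentially an assembly of standard results on the splitting of primes in Galois extensions rather than a computation.

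First I would recall the standard fact that when $L/K$ is Galois, the group $\text{Gal}(L/K)$ acts transitively on the set $\{\mathfrak{P}_1,\ldots,\mathfrak{P}_g\}$ of primes of $\mathcal{O}_L$ lying above $\mathfrak{p}$. Because any $\sigma \in \text{Gal}(L/K)$ fixes $K$ pointwise, it fixes $\mathfrak{p}$ and carries each prime above $\mathfrak{p}$ to another prime above $\mathfrak{p}$ while inducing an isomorphism of the associated local data (the completions, and hence the residue field extensions). Consequently the ramification index and inertial degree are invariant along any Galois orbit. Transitivity then forces $e_i = e$ and $f_i = f$ to be independent of $i$, so in particular all of the $\mathfrak{P}_i$ share a single inertial degree $f$, giving the common-value assertions of the statement.

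Next I would invoke the fundamental identity $\sum_{i=1}^{g} e_i f_i = [L:K]$. With $e_i = e$ and $f_i = f$ for every $i$, this collapses to $efg = [L:K]$. Finally, since $L/K$ is unramified—the defining property of the Hilbert class field—every ramification index equals $1$, so $e = 1$. Substituting yields $fg = [L:K]$, which completes the proof and simultaneously records that $e=1$ as claimed.

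The one nontrivial input is the transitivity of the Galois action together with its preservation of $e$ and $f$; everything else is bookkeeping with the fundamental identity and the unramified hypothesis. Since these are textbook facts (see, e.g., the general theory of prime splitting in \cite[Chapter 5]{Cox} or \cite{Neukirch}), I do not anticipate a serious obstacle beyond citing them correctly and being careful that the Galois and unramified properties of $L/K$ are both used in the right place.
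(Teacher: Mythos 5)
Your proof is correct and is exactly the standard argument that the paper defers to (the paper gives no proof of its own, only a citation to \cite[Chapter 5]{Cox}, where this same reasoning appears): Galois transitivity of $\mathrm{Gal}(L/K)$ on the primes above $\mathfrak{p}$ forces all $e_i$ and $f_i$ to coincide, the fundamental identity $\sum_i e_i f_i = [L:K]$ then collapses to $efg=[L:K]$, and the unramifiedness of the Hilbert class field gives $e=1$. No gaps; this is the intended proof.
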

\begin{proof}
Proof in \cite[Chapter 5, Section A]{Cox}.
\end{proof}
\begin{restatable}{lemma}{Cox Lemma 7.7}
\label{Cox Lemma 7.7}
Let $Q(x,y)=ax^2+bxy+cy^2$ be a form of discriminant $\Delta$. The map sending $Q$ to $[a,(-b+\sqrt{\Delta})/2]$ induces an isomorphism between the class group $C(\Delta)$ and the ideal class group $C(\mathcal{O}_K)$.
\end{restatable}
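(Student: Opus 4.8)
The plan is to exhibit the map explicitly, check it is well defined into the ideal class group, and then verify that it is a bijective homomorphism; throughout write $\tau = (-b+\sqrt{\Delta})/2$ and let $\mathcal{O}$ denote the quadratic order of discriminant $\Delta$ (so $\mathcal{O} = \mathcal{O}_K$ precisely when $\Delta$ is fundamental), with $C(\mathcal{O}_K)$ understood as the group of proper fractional $\mathcal{O}$-ideals modulo nonzero principal ideals. First I would show that $\mathfrak{a}_Q := a\mathbb{Z} + \tau\mathbb{Z}$ is a proper fractional $\mathcal{O}$-ideal. The key computation is that $\tau$ is a root of $x^2 + bx + ac$, since $\tau + \bar\tau = -b$ and $\tau\bar\tau = (b^2-\Delta)/4 = ac$; hence $\tau^2 = -b\tau - ac \in \mathfrak{a}_Q$ and $a\tau \in \mathfrak{a}_Q$, giving $\tau\mathfrak{a}_Q \subseteq \mathfrak{a}_Q$. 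Because a generator of $\mathcal{O}$ over $\mathbb{Z}$ differs from $\tau$ by a rational integer, this yields $\mathcal{O}\mathfrak{a}_Q \subseteq \mathfrak{a}_Q$, and primitivity of $Q$ forces the multiplier ring to be exactly $\mathcal{O}$, so $\mathfrak{a}_Q$ is proper.

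Next I would check that the map descends to a well-defined \emph{injection} on classes. If $Q'$ is obtained from $Q$ by a change of variables in $SL_2(\mathbb{Z})$, a direct change-of-basis computation shows $\mathfrak{a}_{Q'} = \lambda\mathfrak{a}_Q$ for some $\lambda \in K^\times$, so properly equivalent forms land in the same ideal class; running the same computation in reverse recovers an $SL_2(\mathbb{Z})$-equivalence from any equality of ideal classes, giving injectivity. For surjectivity I would start from a proper fractional ideal, rescale to an integral one, and choose an \emph{oriented} $\mathbb{Z}$-basis $[\alpha,\beta]$ normalized so that $(\alpha\bar\beta - \bar\alpha\beta)/\sqrt{\Delta} > 0$; the norm form $Q_{\mathfrak{a}}(x,y) = N(\alpha x + \beta y)/N(\mathfrak{a})$ is then a primitive form of discriminant $\Delta$ whose associated ideal lies in the class of $\mathfrak{a}$, so every ideal class is hit. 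Together these steps make the map a bijection.

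It remains to verify the \emph{homomorphism} property, that Gauss composition corresponds to ideal multiplication, and this is the main obstacle, since the composition law recalled in Section \ref{Background} is unwieldy to apply directly. The clean route is Dirichlet's: given two classes, pick \emph{united} representatives $Q_1 = a_1 x^2 + bxy + a_2 c\, y^2$ and $Q_2 = a_2 x^2 + bxy + a_1 c\, y^2$ sharing a middle coefficient with $\gcd(a_1,a_2) = 1$, for which composition is the explicit form $Q_1 \circ Q_2 = a_1 a_2 x^2 + bxy + c\, y^2$. Then $\mathfrak{a}_{Q_1}\mathfrak{a}_{Q_2}$ is generated over $\mathbb{Z}$ by $a_1 a_2$, $a_1\tau$, $a_2\tau$, and $\tau^2 = -b\tau - a_1 a_2 c$; since $\gcd(a_1,a_2) = 1$ writes $\tau$ as an integer combination of $a_1\tau$ and $a_2\tau$, the product collapses to $a_1 a_2\mathbb{Z} + \tau\mathbb{Z} = \mathfrak{a}_{Q_1 \circ Q_2}$. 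Because every pair of classes admits such united representatives, the map sends composition to multiplication exactly, completing the proof that it is a group isomorphism. The two places demanding care are the normalization ensuring united representatives exist (an elementary but fiddly congruence argument) and confirming that the orientation convention makes $Q_{\mathfrak{a}}$ positive definite with the correct sign of $\sqrt{\Delta}$.
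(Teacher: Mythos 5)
The paper offers no proof of its own for this lemma, deferring entirely to Cox, and your sketch is precisely the canonical argument from that source: the $\mathbb{Z}$-module $a\mathbb{Z}+\tau\mathbb{Z}$ with properness from primitivity, oriented bases giving surjectivity via the norm form $N(\alpha x+\beta y)/N(\mathfrak{a})$, and Dirichlet composition on united (concordant) representatives to verify the homomorphism property, so your approach matches the paper's (cited) proof and is correct in outline, with the two subtleties you flag being exactly the standard ones. Your opening caveat---that for non-fundamental $\Delta$ the target must be read as the ideal class group of the order of discriminant $\Delta$ rather than of $\mathcal{O}_K$---is correct and in fact repairs an imprecision in the paper's statement.
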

\begin{proof}
Proof in \cite[Chapter 5, Section D]{Cox}.
\end{proof}

%\begin{definition}[Artin Symbol]

%Let $\mathfrak{p}$ be a prime of $\mathcal{O}_K$ and let $\mathfrak{P}$ be a prime of $\mathcal{O}_L$ containing $\mathfrak{p}$. Then the \textbf{Artin symbol} is the unique element $\sigma\in \text{Gal}(L/K)$ such that for all $\alpha\in\mathcal{O}_L$, \[\sigma(\alpha)\equiv \alpha^{N(\mathfrak{p})}\pmod{\mathfrak{P}},\] where $N(\mathfrak{p})=|\mathcal{O}_K/\mathfrak{p}|$ \cite{Cox}.
%\end{definition}

\begin{restatable}{lemma}{Cox Lemma 5.23}
\label{Cox Lemma 5.23}
The Artin map \[\left(\frac{L/K}{\cdot}\right):I_K\to \text{Gal}(L/K),\] where $I_K$ is the set of ideals of $\mathcal{O}_K$, induces an isomorphism between $C(\mathcal{O}_K)$ and $\text{Gal}(L/K)$.
\end{restatable}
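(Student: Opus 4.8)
The plan is to reduce the entire statement to a count of primes in $L$ via the three lemmas, the governing numerics being $[L:K]=h(\Delta)$ (as $L$ is the Hilbert class field of $K$) and therefore $[L:\Q]=2h(\Delta)$. First I would set up a dictionary: by Lemma~\ref{Cox Lemma 7.7} a form $Q$ of discriminant $\Delta$ corresponds to a class $[\mathfrak{a}]\in C(\mathcal{O}_K)$, and Lemma~\ref{Cox Lemma 5.23} sends $[\mathfrak{a}]$ to an element $\sigma\in\text{Gal}(L/K)$ of the same order as $Q$. The pivotal fact is that for a prime $\mathfrak{p}$ of $K$ the image $\left(\frac{L/K}{\mathfrak{p}}\right)$ is the Frobenius at $\mathfrak{p}$, whose order in $\text{Gal}(L/K)$ equals the common inertial degree $f$ of the primes of $L$ above $\mathfrak{p}$ supplied by Lemma~\ref{Cox Lemma 5.9}; combined with $fg=[L:K]$, ``order of the class of $Q$'' is thereby converted into ``number of primes above $\mathfrak{p}$.''

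For the forward direction, assume $p\nmid d_K$ is represented by some form $Q$ of order $m$. Representability forces $p$ to split in $K$ (by the correspondence recalled just before the theorem), say $p\mathcal{O}_K=\mathfrak{p}\bar{\mathfrak{p}}$ with $\mathfrak{p}\neq\bar{\mathfrak{p}}$. Since a prime can only be represented properly, $Q$ is properly equivalent to a form with leading coefficient $p$, so under Lemma~\ref{Cox Lemma 7.7} the class of $Q$ matches $[\mathfrak{p}]$ (or its inverse $[\bar{\mathfrak{p}}]$); hence $[\mathfrak{p}]$ has order $m$. By Lemma~\ref{Cox Lemma 5.23} the Frobenius at $\mathfrak{p}$ has order $m$, i.e. the inertial degree is $f=m$, so Lemma~\ref{Cox Lemma 5.9} yields $g=h(\Delta)/m$ primes of $L$ above $\mathfrak{p}$ and likewise $h(\Delta)/m$ above $\bar{\mathfrak{p}}$. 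These two families are disjoint, contracting to the distinct primes $\mathfrak{p}$ and $\bar{\mathfrak{p}}$, so $p$ has exactly $2h(\Delta)/m=[L:\Q]/m$ prime factors in $L$; the factor $2$ is precisely the contribution of $[K:\Q]$.

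For the converse, suppose $p$ splits into $[L:\Q]/m=2h(\Delta)/m$ factors in $L$. As $L/\Q$ is Galois and $p$ is unramified in $L$, all these primes share a residue degree $f'$ with $f'\cdot(2h(\Delta)/m)=2h(\Delta)$, so $f'=m$ and the Frobenius $\sigma_p\in\text{Gal}(L/\Q)$ has order $m$. If $\sigma_p\in\text{Gal}(L/K)$ then $p$ splits in $K$ and $\sigma_p$ is the Artin symbol of a prime $\mathfrak{p}\mid p$ whose class has order $m$; running the dictionary backwards, the order-$m$ form attached to $[\mathfrak{p}]$ represents $p$, which is what we want.

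The step I expect to be the real obstacle is the remaining alternative in the converse, namely that $\sigma_p$ acts nontrivially on $K$, i.e. $p$ is inert. Here $\text{Gal}(L/\Q)$ is the generalized dihedral extension of $\text{Gal}(L/K)\cong C(\mathcal{O}_K)$ in which complex conjugation acts by inversion, so every element outside $\text{Gal}(L/K)$ has order $2$; an inert prime therefore always has $f'=2$ and splits into exactly $h(\Delta)$ factors. For $m\neq 2$ this causes no trouble, since $h(\Delta)\neq 2h(\Delta)/m$ and the hypothesis rules the case out. For $m=2$, however, an inert $p$ and a split $p$ whose class has order $2$ both produce $h(\Delta)$ factors while only the latter is represented, so the converse as literally stated needs the extra input that $p$ splits in $K$ (equivalently $\legendre{\Delta}{p}=1$, which the discussion before the theorem already identifies with representability). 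Pinning down this order-$2$ distinction --- and, relatedly, reconciling the hypothesis $p\nmid d_K$ with the $p\nmid\Delta$ needed to invoke Lemma~\ref{Cox Lemma 2.5} --- is where I expect the argument to require the most care.
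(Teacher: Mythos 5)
You have proved the wrong statement. The statement you were asked to establish is Lemma \ref{Cox Lemma 5.23} itself---the Artin reciprocity isomorphism $C(\mathcal{O}_K)\cong\text{Gal}(L/K)$ for the Hilbert class field---which the paper does not prove but simply cites to Cox, Chapter 5, Section C. A proof of it would have to show that the Artin map is well defined on ideal classes (i.e.\ that its kernel is exactly the subgroup of principal ideals, which is the content of the Artin reciprocity theorem specialized to conductor $1$) and that it is surjective (which needs analytic or Chebotarev-type input); none of this appears in your proposal. Worse, your argument explicitly \emph{invokes} Lemma \ref{Cox Lemma 5.23} as a tool (``by Lemma \ref{Cox Lemma 5.23} the Frobenius at $\mathfrak{p}$ has order $m$''), so read as a proof of that lemma it is circular. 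What you have actually sketched is a proof of Theorem \ref{number of ideals}, the downstream result that consumes the lemma.

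It is worth noting that, judged as a proof of Theorem \ref{number of ideals}, your sketch is more careful than the paper's own: the paper dispatches the converse with ``since all of the homomorphisms in the forward direction were isomorphisms, this direction follows similarly,'' which silently ignores precisely the alternative you isolate---in the generalized dihedral group $\text{Gal}(L/\Q)$ every element outside $\text{Gal}(L/K)$ has order $2$, so an inert prime also splits into $h(\Delta)$ factors and collides with the split, order-$m=2$ case unless one imposes $\legendre{d_K}{p}=1$, i.e.\ that $p$ splits in $K$. Your observation about reconciling $p\nmid d_K$ with the $p\nmid\Delta$ hypothesis of Lemma \ref{Cox Lemma 2.5} is likewise a real gap in the paper's exposition. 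But none of this discharges the assignment: for Lemma \ref{Cox Lemma 5.23} the honest options are either to cite it as the paper does, or to reproduce the genuine class-field-theoretic argument (reciprocity plus the conductor computation identifying $L$ as the ray class field of conductor $1$), and your proposal does neither.
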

\begin{proof}
Proof in \cite[Chapter 5, Section C]{Cox}.
\end{proof}

\begin{restatable}{lemma}{Cox Lemma 5.21}
\label{Cox Lemma 5.21}
Let $\mathfrak{p}$ be a prime in $K$. Given a prime $\mathfrak{P}$ of $L$ containing $\mathfrak{p}$, the order of $\left(\frac{L/K}{\mathfrak{P}}\right)=f$, the inertial degree of $\mathfrak{p}$.
\end{restatable}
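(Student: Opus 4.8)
The plan is to identify the order of the Artin symbol $\sigma = \left(\frac{L/K}{\mathfrak{P}}\right)$ in $\mathrm{Gal}(L/K)$ with the order of the Frobenius automorphism of the residue field extension $(\mathcal{O}_L/\mathfrak{P})/(\mathcal{O}_K/\mathfrak{p})$, which by definition of the inertial degree has degree $f$. Since $L$ is the Hilbert class field of $K$, the extension $L/K$ is unramified, so $\mathfrak{p}$ has ramification index $e = 1$ (this is exactly the content of Lemma \ref{Cox Lemma 5.9}), and this is the hypothesis that drives the whole argument. First I would record that $\mathcal{O}_K/\mathfrak{p}$ and $\mathcal{O}_L/\mathfrak{P}$ are finite fields of orders $N(\mathfrak{p})$ and $N(\mathfrak{p})^{f}$, so their extension is cyclic of degree $f$ with Galois group generated by the Frobenius $x \mapsto x^{N(\mathfrak{p})}$, an automorphism of order exactly $f$.

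For the lower bound on the order of $\sigma$, I would iterate the defining congruence. Because the Artin symbol lies in the decomposition group $D(\mathfrak{P}/\mathfrak{p}) = \{\tau \in \mathrm{Gal}(L/K) : \tau(\mathfrak{P}) = \mathfrak{P}\}$, it preserves congruences modulo $\mathfrak{P}$, and applying $\sigma(\alpha) \equiv \alpha^{N(\mathfrak{p})} \pmod{\mathfrak{P}}$ repeatedly gives $\sigma^{d}(\alpha) \equiv \alpha^{N(\mathfrak{p})^{d}} \pmod{\mathfrak{P}}$ for every $d \geq 1$. If $\sigma^{d} = \mathrm{id}$, then $\alpha^{N(\mathfrak{p})^{d}} \equiv \alpha \pmod{\mathfrak{P}}$ for all $\alpha \in \mathcal{O}_L$, which forces the $d$-th power of the residue Frobenius to be the identity of $\mathcal{O}_L/\mathfrak{P}$; since that Frobenius has order $f$, we get $f \mid d$, and hence the order of $\sigma$ is at least $f$.

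The main obstacle is the reverse inequality, because the congruence alone only shows $\sigma^{f}(\alpha) \equiv \alpha^{N(\mathfrak{p})^{f}} = \alpha^{N(\mathfrak{P})} \equiv \alpha \pmod{\mathfrak{P}}$, i.e. that $\sigma^{f}$ fixes $\mathcal{O}_L$ \emph{modulo} $\mathfrak{P}$ rather than identically. To upgrade this I would use the standard surjection from the decomposition group onto the Galois group of the residue extension, $D(\mathfrak{P}/\mathfrak{p}) \twoheadrightarrow \mathrm{Gal}\big((\mathcal{O}_L/\mathfrak{P})/(\mathcal{O}_K/\mathfrak{p})\big)$, whose kernel is the inertia group of $\mathfrak{P}$. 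As $L/K$ is unramified the inertia group is trivial, so this map is an isomorphism; by construction $\sigma$ maps to the residue Frobenius under it, and an isomorphism preserves orders, giving order exactly $f$. Equivalently, $\sigma^{f}$ lies in $D(\mathfrak{P}/\mathfrak{p})$ and acts trivially on the residue field, hence lies in the trivial inertia group, so $\sigma^{f} = \mathrm{id}$ and the order of $\sigma$ is at most $f$. Combining the two bounds gives that the order of $\sigma$ equals $f$. The only delicate points — triviality of inertia in an unramified extension and exactness of the reduction sequence for the decomposition group — are standard facts about Galois extensions of number fields (see \cite{Neukirch}), so the rest is bookkeeping.
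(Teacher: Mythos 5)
Your proof is correct, and it is essentially the argument the paper implicitly invokes: the paper gives no in-text proof of this lemma, deferring to \cite[Chapter 5, Section C]{Cox}, where the standard decomposition-group proof you reconstruct appears. Your key steps --- that the Artin symbol reduces to the residue Frobenius of order $f$, and that the reduction map $D(\mathfrak{P}/\mathfrak{p}) \to \mathrm{Gal}\bigl((\mathcal{O}_L/\mathfrak{P})/(\mathcal{O}_K/\mathfrak{p})\bigr)$ is an isomorphism because the inertia group is trivial in the unramified extension $L/K$ --- are exactly the textbook route, so nothing further is needed.
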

\begin{proof}
Proof in \cite[Chapter 5, Section C]{Cox}.
\end{proof}

We can now prove Theorem \ref{number of ideals}.
\begin{proof}
Let $\Delta=s^2n$, where $n<0$ is square-free. Let $K=\Q(\sqrt{n})$ and $L$ be the Hilbert class field of $K$.

	To prove the forward direction, assume that $p$ is a prime represented by a form $Q$ of discriminant $\Delta$ and order $m$ in the class group. Then $p$ splits in $\mathcal{O}_K$. Let $p\mathcal{O}_K=\mathfrak{p}_1\mathfrak{p}_2$ be the prime factorization of $p$ in $\mathcal{O}_K$. Suppose that $\mathfrak{p}_1$ splits into $\mathfrak{P}_{1,1}^{e_{1,1}}\ldots \mathfrak{P}_{1,g}^{e_{1,g}}$ and $\mathfrak{p}_2$ splits into $\mathfrak{P}_{2,1}^{e_{2,1}}\ldots \mathfrak{P}_{2,h}^{e_{2,g}}$ in $L$. We know that they split into the same number of primes because $L$ is a Galois extension of $\mathbb{Q}$.

	We know that since $L$ is an unramified extension of $K$, $e_{i,j}=1$ for all $i,j$. Since $L$ is a Galois extension of $K$, we also know that $f_{1,i}=f_{1,j}$ and $f_{2,i}=f_{2,j}$ for all $i,j$ by Lemma \ref{Cox Lemma 5.9}. Let $f_{1,i}=f_1$ and $f_{2,i}=f_2$ for all $i$. By Lemma \ref{Cox Lemma 7.7}, there is an isomorphism between the form $Q$ that represents $p$ and the ideal $[a,(-b+\sqrt{\Delta})/2]$. We can see that this ideal is equal to $\mathfrak{p}_1$.

	By Lemma \ref{Cox Lemma 5.23}, there is also an isomorphism between the ideal $\mathfrak{p}_1$ and $\left(\frac{L/K}{\mathfrak{p}_1}\right)$. Hence the order of the Artin symbol, denoted $\left|\left(\frac{L/K}{\mathfrak{p}_1}\right)\right|$, is the same as the order of $Q$ in the class group. The Artin symbol has order $f_{1}$ by Lemma \ref{Cox Lemma 5.21}. We also know that $[L:K]=f_1g$ by Lemma  \ref{Cox Lemma 5.9}. Thus,

\[ g=\frac{[L:K]}{f_1}=\frac{[L:K]}{\left|\left(\frac{L/K}{\mathfrak{p}_1}\right)\right|}=\frac{[L:K]}{m}.\]

Hence the prime $p$ splits into $n$ factors, where
\[n=g+h=\frac{2[L:K]}{m}=\frac{[L:\Q]}{m}.\]

To prove the backwards direction, assume that $p$ splits into $n$ factors in $L$. Since all of the homomorphisms in the forward direction were isomorphisms, this direction follows similarly.
\end{proof}

\section{Examples}
\label{Examples}
\textbf{Example of Corollary \ref{OddClSizeCor}}:
Let $\Delta_1 = -47$ and $\Delta_2 = -23$. The forms of $S_{\Delta_1}$ are
\begin{eqnarray*}
Q_1(x,y) = x^2 + xy + 12y^2\\
Q_2(x,y) = 2x^2 + xy + 6y^2\\
Q_3(x,y) = 2x^2 - xy +6y^2\\
Q_4(x,y) = 3x^2 + xy +4y^2\\
Q_5(x,y) = 3x^2 - xy +4y^2.
\end{eqnarray*}
The class group $C(\Delta_1) \cong (\mathbb Z / 5 \mathbb Z)$, and we have:%We can construct a Cayley table for $C(\Delta_1)$ to obtain the following compositions:
\begin{eqnarray*}
Q_1 &=& Q_2\circ Q_3\circ Q_4\circ Q_5\\
Q_2 &=& Q_2\circ Q_3\circ Q_4\circ Q_4\\
Q_3 &=& Q_2\circ Q_3\circ Q_5\circ Q_5\\
Q_4 &=& Q_4\circ Q_5\circ Q_3\circ Q_3\\
Q_5 &=& Q_4\circ Q_5\circ Q_2\circ Q_2.
\end{eqnarray*}
As $2$ and $6$ are the $x^2$ and $y^2$ coefficients, respectively, of $Q_2$ and $Q_3$, they are represented by $Q_2$ and $Q_3$. Similarly $3$ and $4$ must be represented by $Q_4$ and $Q_5$. %Also, since 3 and 4 are the $x^2$ and $y^2$ coefficients, respectively, of $Q_4$ and $Q_5$, they are represented by $Q_4$ and $Q_5$.  Thus by the composition law, 
The above compositions imply that $2\cdot 6\cdot 3\cdot 4 = 144$ is represented by $Q_1,\ldots, Q_5$. Also, we can compose each form with the principal form, $Q_1$, to show that $144^2 \in \textrm{int}(S_{\Delta_1})$.%each form represents $144^2$.\\

The elements of $S_{\Delta_2}$ are:
\begin{eqnarray*}
Q_6 &=& x^2 + xy + 6y^2\\
Q_7 &=& 2x^2 + xy + 3y^2\\
Q_8 &=& 2x^2 - xy +3y^2.
\end{eqnarray*}
Here $C(\Delta_2) \cong (\mathbb Z / 3 \mathbb Z)$ and one can see
\begin{eqnarray*}
Q_6 &=& Q_7\circ Q_8\\
Q_7 &=& Q_8\circ Q_8\\
Q_8 &=& Q_7\circ Q_7.
\end{eqnarray*}
Now, since 2 and 3 are the $x^2$ and $y^2$ coefficients, respectively, of $Q_7$ and $Q_8$, they are represented by $Q_7$ and $Q_8$. Thus %by the composition law and the above compositions, $2*3 = 
$6$ is represented by $Q_6,Q_7,$ and $Q_8$. Also, we can compose each form with the principal form to show that $6^2 \in \textrm{int}(S_{\Delta_2})$.\\

Now, if a form $Q$ represents an integer $m$, then it also represents $n^2m$ for all $n\in \mathbb{Z}$.  Thus $6^2\cdot 144^2 \in \textrm{int}(S_{\Delta_1} \cup S_{\Delta_2})$.

\textbf{Example of Theorem \ref{number of ideals}:} This result was developed as a tool for answering a more general question: Let $p \neq q$ be distinct primes, and consider all forms of the type $px^2+bxy+qy^2$. What primes beyond $p$ and $q$ are represented by all forms?\\%the question, ``What primes other than $p$ and $q$ are represented by the forms $px^2+bxy+qy^2$?'' 

In our example we let $p=2$ and $q=11$. Then we have the following four forms: %We will demonstrate the power of our theorem by exploring what primes other than $2$ and $11$ are represented by the forms $2x^2+bxy+11y^2$. The four possible forms are listed below.

\begin{center}
\begin{tabular}{c||c|c}
 Form & $\Delta(Q)$ & $C(\Delta)$  \\ \hline
$Q_1(x,y) =2x^2+11y^2$ & -88 & $\Z/2\Z$\\
$Q_2(x,y)=2x^2+xy+11y^2$ & -87 & $\Z/6\Z$\\
$Q_3(x,y)=2x^2-xy+11y^2$ & -87 & $\Z/6\Z$\\
$Q_4(x,y)=2x^2+2xy+11y^2$ & -84 & $(\Z/2\Z)^2$\\
\end{tabular}\\
\end{center}

Due to Clark et al. \cite{GoNI}, we can classify which primes $p \nmid 2\Delta$ are represented by $Q_1$ and $Q_4$ using modular conditions, and we find that a prime $p\neq 3,7$ will be represented by both $Q_1$ and $Q_4$ if and only if $\legendre{p}{11}=-1$, $\legendre{p}{7}=1$, and $p\equiv 11 \pmod{24}$.\\

While modular conditions are not sufficient to determine the primes represented by $Q_2$ and $Q_3$ they are improperly equivalent and hence represent exactly the same integers. The imaginary quadratic number field associated to them is $K=\Q(\sqrt{-87})$. The Hilbert class field $L$ is a degree 6 extension of $K$ and a degree 12 extension of $\Q$. As $Q_2$ has order 6 in the class group, by Theorem \ref{number of ideals} a prime $p$ not dividing the discriminant of $K$ represented by $Q_2$ (or $Q_3$) splits into 2 factors in $L$.\\

Since $Q_2$ and $Q_3$ are the only forms of order $6$ in $C(-87)$, a prime $p\nmid-87$ is represented by $Q_2$ and $Q_3$ if and only if $p$ splits into two factors in $L$. So now we can say that a prime $p \neq 2,3,7,11,29$ is represented by all of the forms $2x^2+bxy+11y^2$ if and only if $\legendre{p}{11}=-1$,$\legendre{p}{7}=1$, $p\equiv 3\pmod{8}$, $p\equiv 2 \pmod {3}$, and $p$ splits into two factors in the Hilbert class field of $K=\Q(\sqrt{-87})$. We are guaranteed an infinite number of these primes by the Chebotarev Density Theorem. The smallest prime for which all of these conditions hold is $659$.

%%%%%%%%%%%%%%%%%%%%%%%%%%%%%%%%%%%%%%%%%
%% Bibliography
%%%%%%%%%%%%%%%%%%%%%%%%%%%%%%%%%%%%%%%%%


\begin{thebibliography}{fszw90}
\bibitem{TAU} M. Barowsky, W. Damron, A. Mejia, F. Saia, N. Schock, K. Thompson, \textit{Classically integral quadratic forms excepting at most two values}, accepted Proc. Amer. Math. Soc., 2017.
\bibitem{Manjul} M. Bhargava, \textit{Higher composition laws. I. A new view on Gauss composition, and quadratic generalizations}, Ann. of Math. (2), 159(1): 217-250, 2004.
\bibitem{BH} M. Bhargava and J. Hanke, \textit{Universal quadratic forms and the $290$-Theorem}, preprint 2005.
\bibitem{Buell} D. Buell, \textit{Binary quadratic forms. Classical theory and modern computations.} Springer-Verlag, New York, 1989. 
\bibitem{Cassels} J.W.S. Cassels, \textit{Rational quadratic forms}. Courier Dover Publications, 2008.
\bibitem{GoNI} P.L. Clark, J. Hicks, H. Parshall, K. Thompson, \textit{GoNI: Primes represented by binary quadratic forms}, INTEGERS 13 (2013), A37, 18pp.
\bibitem{Cox} D.A. Cox, \textit{Primes of the form $x^2+ ny^2$: Fermat, class field theory, and complex multiplication},
 Vol. {34}, John Wiley $\&$ Sons, 2011.
 \bibitem{Dickson} L.E. Dickson, \textit{Integers represented by positive ternary quadratic forms}, Bull. Amer. Math. Soc. 33 (1927), 63-70.
\bibitem{Gauss} C. Gauss, \textit{Disquisitiones Arithmeticae} (Latin). English translation by A. Clarke, revised by W. Waterhouse, 1986 Springer-Verlag reprint of the Yale University Press, New Haven, 1966 edition.
\bibitem{Hagedorn} T. Hagedorn, \textit{Primes of the form $x^2+ny^2$ and Geometry of (Convenient) Numbers}, preprint.
\bibitem{Lagrange} J.L. Lagrange, \textit{Demonstration d'un Theoreme d'Arithmetique}, Oevres vol 3. (1869), 189-201.
\bibitem{Lam} T.Y. Lam, \textit{Introduction to Quadratic Forms Over Fields}
\bibitem{Legendre} Legendre, A.-M., \textit{Essai sur la th\`eorie des nombres}, Paris, An VI (1797-1798)
\bibitem{Neukirch} J. Neukirch, \textit{Algebraic Number Theory}
\bibitem{Rouse} J. Rouse, \textit{Quadratic forms representing all odd positive integers}, American Journal of Mathematics, \textbf{136} (2014), no. 6, 1693-1745.
\bibitem{SerreD} J.-P. Serre, $\Delta = b^2-4ac$. Math. Medley \textbf{13} (1985), pp. 1-10.
\bibitem{Serre} J.-P Serre, \textit{A course in arithmetic}, Vol. 7, Springer Science $\&$ Business Media, 2012. 
\bibitem{Voight} J. Voight, \textit{Quadratic forms that represent almost the same primes}, Math. Comp. 76 (2007), 1589-1617.
\bibitem{Watkins} M. Watkins, \textit{Class numbers of imaginary quadratic fields}, Mathematics of Computation, Volume 73, Number 246, pp. 907-938, 2003.
\end{thebibliography}
\end{document}